\newtheorem{theorem}{Theorem}
\newtheorem{example}{Example}
\newtheorem{lemma}[theorem]{Lemma}
\newcommand{\compplain}{\frak{M}_{g,0,2r}}
\newcommand{\dpi}{dp \vert_{\pi}}
\newtheorem{corollary}[theorem]{Corollary}
\newcommand{\bb}{\mathbb{C}}
\newcommand{\om}{\mathcal{O}_M}
\newcommand{\oxp}{\mc{O}_{X,+}}
\newcommand{\z}{\mathbb{Z}}
\newcommand{\comp}{\frak{M}_{g,n,2r}}
\newcommand{\mcx}{\mc{X}}
\newcommand{\derscf}{\underline{\on{Der}}^{scf}_k} 
\newcommand{\compbranch}{\tilde{\frak{M}}_{d, \rho}}
\newcommand{\defpi}{\on{Def}_{\pi}}
\newcommand{\ox}{\mc{O}_X}
\newcommand{\oxprime}{\mc{O}_{X'}}
\newcommand{\nspunc}{\mc{P}_{NS}}
\newcommand{\rpunc}{\mc{P}_{R}}
\newcommand{\pp}{\mathbb{P}}
\newcommand{\aff}{\mathbb{A}}
\newcommand{\Spec}{\on{Spec} }
\newcommand{\mc}[1]{\mathcal{#1}}
\newcommand{\on}[1]{\operatorname{#1}}
\newcommand{\pull}{\pi^*} 
\newcommand{\ov}[1]{\overline{#1}}
\newcommand{\tilc}{\tilde{C}}
\newcommand{\tilx}{\tilde{X}}
\newcommand{\tils}{\tilde{S}}
\newcommand{\tilsup}{\tilde{\mc{D}}}
\newcommand{\oc}{\mc{O}_C}
\newcommand{\oct}{\mc{O}_{\tilc}}
\newcommand{\modtwoone}{\frak{M}_{2,1}}
\newcommand{\lgr}{\overset{\sim}{\longrightarrow}}
\newcommand{\os}{\mc{O}_S}
\newcommand{\otpr}{\mc{O}_{T'}}
\newcommand{\wh}[1]{\widehat{#1}}
\newcommand{\frk}[1]{\mathfrak{m}_{#1}}
\newcommand{\oto}{\mc{O}_{T_0} }
\newcommand{\otr}{\mc{O}_{\terd}}
\newcommand{\ot}{\mc{O}_T}
\newcommand{\mxx}{\mc{X}}
\newcommand{\spe}{\on{Spec}}
\newcommand{\mm}{\mc{M}}
\newcommand{\terd}{T_{\on{red}}}
\newcommand{\sS}{\on{sAffSch}} 
\title{Supermoduli space with Ramond punctures is not projected}
\author{Ron Donagi, Nadia Ott}
\date{}
\begin{document}

\maketitle
Supergeometry is the study of spaces described by a $\z_2$-graded sheaf of functions, $ \ox = \oxp \oplus \mc{O}_{X,-}$  whose sections obey the rule of signs, 
\[ ab = (-1)^{|a||b|} ba. \] 
Every superspace $X$ determines an ordinary space $M$ which naturally embeds into $X$, and one can ask if this embedding $M \subset X$ has a projection $X \to M$. If such a projection exists, then $X$  is said to be \emph{projected}. If, in addition, the projection $X \to M$ makes $X$ into a vector bundle over $M$, then $X$ is said to be  \emph{split}. One important class of examples of superspaces are the moduli spaces of super Riemann surfaces, known as \emph{supermoduli spaces} and denoted by $\frak{M}_{g,n_S,n_R}$. Here $g$ is the genus, $n_S$ is the number of Neveu-Schwarz punctures, and $n_R$ the (always even) number of Ramond punctures. In physics, integrals over supermoduli space calculate important quantities like the partition function and scattering amplitude. So far, the most successful computations of these quantities have relied upon the splitness of the associated supermoduli space; notably,  D'Hoker and Phong's computation of the scattering amplitude in genus $g=2$ \cite{d2002two}. 
However, most supermoduli spaces are not split or projected, \cite{donagi2015supermoduli}. Specifically, the cited work demonstrates that $\frak{M}_{g,n_S,0}$ is not projected for all $g \ge 5$ when $n_S=0$, and for all $g \ge n_S+1 \ge 2$ when $n_S \ge 1$.  This paper's main result is that most supermoduli spaces with Ramond punctures $\frak{M}_{g,0,n_r}$  are not projected.

\begin{theorem} \label{main:intro} Let $r >0$. The supermoduli space $\frak{M}_{g,0,2r}$ is not projected for all $g \ge 5r +1 \ge 6$.

\end{theorem}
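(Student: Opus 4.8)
The plan is to follow the obstruction-theoretic method of \cite{donagi2015supermoduli} and exhibit a single nonvanishing cohomology class that obstructs every projection. Write $M$ for the reduced space of $\frak{M}_{g,0,2r}$, let $\mathcal{J} \subset \mathcal{O}_{\frak{M}_{g,0,2r}}$ be the ideal generated by the odd functions, and set $\mathcal{E} := \mathcal{J}/\mathcal{J}^2$, the odd conormal bundle on $M$. Since the odd generators anticommute, the even part of $\mathcal{O}_{\frak{M}_{g,0,2r}}/\mathcal{J}^3$ is a square-zero extension of $\mathcal{O}_M$ by the module $\wedge^2 \mathcal{E}$, and any projection $\frak{M}_{g,0,2r} \to M$ restricts to an algebra splitting of this extension. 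Hence its class
\[ \omega \in \operatorname{Exal}_{\mathcal{O}_M}(\mathcal{O}_M, \wedge^2\mathcal{E}) \cong H^1(M, T_M \otimes \wedge^2 \mathcal{E}) \]
is the primary obstruction to projectedness, and it suffices to prove $\omega \neq 0$.

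I would then make the two bundles explicit through the universal spin curve. A point of $M$ is a triple $(C, \mathcal{R}, L)$ with $\mathcal{R} = \sum_{i=1}^{2r} x_i$ the Ramond divisor and $L^{\otimes 2} \cong K_C(\mathcal{R})$; since the square roots are discrete data, $M$ is finite over the moduli space of $2r$-pointed genus $g$ curves and $\dim M = 3g-3+2r$, with $T_M$ having fiber $H^1(C, T_C(-\mathcal{R}))$. The odd tangent space is $H^1(C, L^{-1})$, of dimension $2g-2+r$, so by Serre duality $\mathcal{E}$ has fiber $H^1(C, L^{-1})^\vee \cong H^0(C, L\otimes K_C)$. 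With the universal curve $\pi \colon \mathcal{C} \to M$ in hand, I would represent $\omega$ through the second-order part of the super Kodaira-Spencer map: concretely, as the image of a canonical class under a connecting map assembled from the cup product with the Kodaira-Spencer class of $\pi$ and the multiplication of gravitino sections
\[ \mu \colon H^0(C, L\otimes K_C) \otimes H^0(C, L\otimes K_C) \longrightarrow H^0(C, K_C^{\otimes 3}(\mathcal{R})), \]
where I have used $L^{\otimes 2} \cong K_C(\mathcal{R})$.

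To detect $\omega \neq 0$ I would restrict it to a well-chosen complete subfamily $B \hookrightarrow M$ --- for instance fixing the curve $C$ and varying $L$ and the Ramond points, or a one-parameter boundary degeneration --- along which the Leray computation for $\pi$ collapses and $\omega|_B$ becomes an explicit cohomology class on a single curve. In this way the nonvanishing of $\omega$ is reduced to a concrete statement: that the cup product of the Kodaira-Spencer class with a product $\mu(\sigma,\tau)$ of gravitino sections represents a nonzero element of an $H^1(C, \mathcal{F})$, equivalently that a natural map does not vanish. Unlike surjectivity of a Gaussian map, this nonvanishing is a large-genus phenomenon --- it is the abundance of sections of the positive bundles $L\otimes K_C$ and $K_C^{\otimes 3}(\mathcal{R})$ that forces a nonzero class --- and I would verify it by a Riemann-Roch estimate on $C$, now carrying the degree shifts introduced by the degree-$2r$ divisor $\mathcal{R}$, reducing if necessary to an explicit nodal curve where the relevant multiplication maps are understood.

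The main obstacle is precisely this nonvanishing together with the extraction of the sharp threshold. In \cite{donagi2015supermoduli} the unpunctured analogue needs $g \ge 5$; here the divisor $\mathcal{R}$ enters every degree count, and each Ramond pair tightens the Riemann-Roch inequality that guarantees the class survives, which I expect to be exactly what produces the linear bound $g \ge 5r+1$. A secondary point requiring care is that the test family $B$ be chosen so that the restriction map on $H^1(M, T_M \otimes \wedge^2 \mathcal{E})$ is injective on the summand carrying $\omega$ --- so that $\omega|_B$ genuinely computes, rather than accidentally kills, the primary obstruction --- and that the higher obstructions play no role, which holds automatically since the primary class already obstructs projectedness.
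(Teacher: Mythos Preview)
Your proposal takes a genuinely different route from the paper, and as written it has a real gap.

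The paper does \emph{not} compute the obstruction class $\omega_2$ directly on $\frak{M}_{g,0,2r}$. Instead it bootstraps from the known non-projectedness of $\frak{M}_{2,1,0}$ (the genus~$2$ supermoduli space with one NS puncture, proved in \cite{donagi2015supermoduli}). Concretely: the paper introduces a moduli space $\compbranch$ of degree-$d$ branched covers $\pi:\tilx\to X$ of genus~$2$ super Riemann surfaces, ramified with pattern $\rho$ over the single NS puncture. The forgetful map $p:\compbranch\to\frak{M}_{2,1,0}$, $(\pi:\tilx\to X)\mapsto X$, is shown to be finite, so by Corollary~\ref{twoeight} the obstruction $\omega_2(\compbranch)\neq 0$. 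Then the other forgetful map $i:\compbranch\to\frak{M}_{g,0,2r}$, $(\pi:\tilx\to X)\mapsto\tilx$, is shown to be an immersion whose bosonic normal sequence splits, so by Corollary~\ref{twoeleven} the target $\frak{M}_{g,0,2r}$ is non-projected. No cohomology on the universal curve over $\frak{M}_{g,0,2r}$ is ever computed.

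Crucially, the bound $g\ge 5r+1$ has nothing to do with Riemann--Roch on $C$. It is purely the Hurwitz constraint: one needs a degree-$d$ cover of a genus~$2$ curve with exactly $2r$ ramification points of even local degree (these become Ramond punctures) and the remaining of odd local degree. Taking all $d_i=2$ for $1\le i\le 2r$ and $d_i=1$ otherwise, the Hurwitz formula $g=1+\tfrac{3}{2}d-\tfrac{1}{2}s$ with $d=\sum d_i$ forces $g\ge 5r+1$, and Husemoller's existence theorem shows this is sharp. Your expectation that the threshold arises from a degree shift in a Riemann--Roch inequality is therefore a misreading of where the arithmetic actually lives.

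As for your direct strategy: it is in the spirit of the original computation in \cite{donagi2015supermoduli} for the unpunctured case, and something like it might be made to work. But your proposal does not carry out the key step --- you explicitly flag ``the main obstacle is precisely this nonvanishing'' and offer only a heuristic (``abundance of sections \ldots\ forces a nonzero class'') in place of an argument. A Riemann--Roch count can tell you the relevant spaces are large, but it does not by itself show that the specific cup-product class survives to $H^1$; that is the entire content of the theorem, and it is missing. The choice of test family $B$ is also left unspecified. So as it stands the proposal is a plausible plan with its central computation absent, whereas the paper's branched-cover argument sidesteps that computation entirely.
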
 

We start with the finite covering spaces of supermoduli space, \begin{equation} \label{cover}
p: \compbranch \to \frak{M}_{2,1,0}: (\pi: \tilx \to X) \to X
\end{equation}
parameterizing branched covers of genus $2$ super Riemann surfaces with a fixed degree $d$, a specified ramification configuration $\rho=(d_1, \dots, d_s)$, and a single branch point along the one NS puncture on $X$. Our main result about $\compbranch$ is that it is not projected (Lemma \ref{S1}).

Let us now consider those super Riemann surfaces $\tilx$ forming branched covers as in \eqref{cover}. 
The genus of $\tilx$ can be calculated using the usual Hurwitz formula,
\begin{equation} \label{formulahur}
g=1 + d + \frac{1}{2} \sum_{i=1}^s(d_i – 1)=1+\frac{3}{2}d - \frac{1}{2}s
\end{equation}   
The ramification configuration $\rho=(d_1, \dots, d_s)$  keeps track of the number of sheets $d_i$ that come together at the ramification points of $\pi$.  In our application below an even number $2r$ of the  $s$ ramification points will turn into Ramond punctures while the remaining $s-2r$ points will turn into NS punctures.  Furthermore, from \cite{donagi2015supermoduli} we know that $\tilx$ has Ramond punctures corresponding to those ramification points of $\pi$ with even local degree. Note that by formula \eqref{formulahur} there are always an even number of ramification points of even local degree. 
 
Each $\compbranch$ comes with a natural immersion into supermoduli space, 
\begin{equation} \label{i}
    i: \compbranch \to \compplain: (\pi: \tilx \to X) \to \tilx.
\end{equation}
[where $g$ is determined by \eqref{formulahur} and $r$ is as described above. \footnote{ In general, $\tilx$ also has NS punctures, and $i$ is the composition of $\compbranch \to \comp$ with the forgetful functor   $\comp \to \compplain$. }. In Lemma \eqref{S2} we use this immersion to deduce our main result about $\frak{M}_{g,0,2r}$, namely that it is not projected.



The last step of Theorem \ref{main:intro} is  the bound $g \ge 5r+1 \ge 6$. Fix $r >0$. For the sake of brevity, we will say that the supermoduli space $\compplain$ is \emph{realizable} if $g$ is a solution to \eqref{formulahur} for some $d$ and $\rho$. In other words, a supermoduli space $\compplain$ is realizable if it contains $i(\compbranch)$ for some $d, \rho$. From the above discussion, we know that a realizable supermoduli space is  not projected. 

\begin{lemma}\label{range:intro} $\compplain$ is realizable if and only if $g \ge 5r+1 \ge 6$
    
\end{lemma}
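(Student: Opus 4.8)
The plan is to reduce realizability of $\compplain$ to a purely combinatorial question about the ramification configuration $\rho$, and then solve the resulting integer program. By definition, $\compplain$ is realizable precisely when there is a branched cover as in \eqref{cover} whose total space $\tilx$ has genus $g$ and exactly $2r$ Ramond punctures; equivalently, when there is a partition $\rho=(d_1,\dots,d_s)$ with each $d_i\ge 1$ and $\sum_{i=1}^s d_i=d$, having exactly $2r$ even parts, for which \eqref{formulahur} holds. First I would rewrite \eqref{formulahur} in the cleaner form $2(g-1)=3d-s$, so that $s=3d-2g+2$ is determined by $g$ and $d$. The two directions of the equivalence then become statements about which pairs $(d,s)$ can support such a partition.

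For the forward direction (realizable $\implies g\ge 5r+1$) I would extract two inequalities directly from the shape of $\rho$. Since $2r$ of the parts are even and hence at least $2$, while the remaining $s-2r$ parts are at least $1$, summing gives $d=\sum d_i\ge 2(2r)+(s-2r)=s+2r$; and since there are at least $2r$ parts in total, $s\ge 2r$. Substituting $s=3d-2g+2$ into $s\ge 2r$ and into $d\ge s+2r$ sandwiches $d$ as $\tfrac{2g+2r-2}{3}\le d\le g-1-r$. The existence of such a $d$ forces $\tfrac{2g+2r-2}{3}\le g-1-r$, which rearranges to exactly $g\ge 5r+1$. Since $r>0$ gives $5r+1\ge 6$, this yields the chain $g\ge 5r+1\ge 6$.

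For the converse (\,$g\ge 5r+1\implies$ realizable\,), rather than invoke an abstract existence result I would exhibit an explicit configuration. Given $g\ge 5r+1$, set $k=g-5r-1\ge 0$ and take $\rho=(2,\dots,2,1,\dots,1)$ with $2r$ twos and $k$ ones, so that $d=4r+k$ and $s=2r+k$. This partition has exactly $2r$ even parts, and a direct substitution into \eqref{formulahur} gives $1+\tfrac{3d-s}{2}=1+\tfrac{10r+2k}{2}=5r+1+k=g$, so $\compplain$ is realized. The $2r$ twos produce the $2r$ Ramond punctures, while the $k$ parts equal to $1$ are unramified points in the fibre and contribute no punctures.

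The step I expect to require the most care is making sure the explicit $\rho$ above genuinely arises from a connected cover in $\compbranch$, i.e.\ that the cycle type $(2^{2r},1^{k})$ can be realized as the single-point monodromy of a connected degree-$d$ cover of a genus-$2$ surface. The parity obstruction is automatically satisfied---an even number $2r$ of even-length cycles makes the monodromy permutation even, hence expressible as a product of the two commutators coming from the genus-$2$ handle generators---and transitivity can be arranged using those same generators, so no value of $g$ in the range is excluded. The only remaining subtlety is the insistence that $\rho$ may contain parts equal to $1$: this is exactly what makes the lower bound $d\ge s+2r$ (rather than something strictly larger) sharp, and it is precisely the freedom to append unramified sheets that lets every integer $g\ge 5r+1$, and not merely the threshold value, be realized.
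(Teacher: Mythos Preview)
Your argument is correct, and for the sufficiency direction it reproduces exactly the paper's explicit cover: degree $d=g-r-1$ with ramification pattern $(2^{2r},1^{g-5r-1})$, the paper citing Husemoller's theorem for existence where you sketch the monodromy directly. Where you go beyond the paper is in actually proving the necessity direction: the paper states the ``if and only if'' but argues only the ``if'', whereas you extract the inequalities $s\ge 2r$ and $d\ge s+2r$ from the partition shape and push them through $s=3d-2g+2$ to force $g\ge 5r+1$. One small inaccuracy to fix: the parts equal to $1$ do not ``contribute no punctures''---unramified points in the fibre over the branch locus become NS punctures on $\tilx$ (the paper records $g-(5r+1)$ of them in this construction)---though this is harmless for the lemma since the map $i$ to $\compplain$ forgets NS punctures anyway.
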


\begin{proof}

We consider a $d$-sheeted cover $\tilx \to X$ of a genus $2$ curve with a single branch point over which the fiber is specified by the ramification pattern $\rho = (d_1, … , d_s)$ with $\sum_i d_i = d$. The expression in (2) has to be a non-negative integer, namely the genus of $\tilx$. Conversely, Theorem 4 of \cite{husemoller1962ramified} assures us that there are no further constraints: for any $d$, $\rho$ such that the expression in (2) is a non-negative integer, there is a cover $\tilx \to X$ with the specified behavior.
 
As long as $g \ge 5r+1$ we can therefore construct a $d=g-(1+r)$-sheeted cover of $X \in \frak{M}_{2,1}$, branched over the single puncture in $X$ with $d_i = 2$ for $1 \le i \le 2r$ and $d_i=1$ for $i > 2r$. This gives a curve $\tilx$ of genus $g$ with $2r$ Ramond punctures and $g-(5r+1)$ Neveu-Schwarz punctures. (In the minimal case, $g=5r+1$, $d=4r$, and all the $d_i= 2$, so we have $2r$ Ramond punctures. When $g \ge 5r+1$, we still have $2r$ Ramond punctures but now also a positive number of NS punctures.) 

\end{proof}

The paper is structured as follows:  In Section \ref{split}, we look at various examples of split and projected supermanifolds. We also state two Corollaries, \ref{twoeight}, \ref{twoeleven} from \cite{donagi2015supermoduli} that allow one to construct a large class of non-projected supermanifolds via submanifolds and branched covers. 
 Secondly, in Section \ref{prelim}, we provide definitions for supercurves and super Riemann surfaces, and describe the two types of punctures one sees on a super Riemann surface, namely NS punctures and Ramond punctures. Furthermore, we give an account of divisors on supercurves, their connection to points, and clarify why Ramond punctures are not considered punctures. 
In Section \ref{branched}, we consider branched covers $\pi: \tilx \to X$ of super Riemann surfaces and describe the relationship between ramification and the types of punctures on $\tilx$. Lastly, in Section \ref{modbranch}, we define the morphisms in equations \eqref{cover} and \eqref{i}, and provide proof for Lemmas \ref{S1} and \ref{S2}. Additionally, the stackyness of $\compbranch$ is addressed in the appendix of this paper.
\newline

\textbf{Acknowledgements.} 
During the preparation of this work, Ron Donagi was supported in part by NSF grants DMS 2001673 and FRG 2244978, and by Simons HMS Collaboration grant $\#$390287. Nadia Ott was supported in part by Simons HMS Collaboration grant $\#$390287. We thank Eric D’Hoker whose question started us thinking about the material presented here.

\section{Preliminaries: Split and projected supermanifolds} \label{split} 


Let $X=(|X|, \mc{O}_X)$ be a supermanifold and let $J \subset \ox$  denote the ideal sheaf generated by $\mc{O}_{X}^-$.
Any supermanifold $X$ determines an ordinary manifold $X_{bos}:=(|X|, \mc{O}_X/J)$ called its \emph{bosonic reduction}. 
Henceforth, we set $M:=X_{bos}$.

 Given an ordinary manifold $M$ and a locally free sheaf  $\mc{X}$ on $M$, we can always construct a supermanifold, denoted by $S(M,  \mc{X})$, by applying the parity reversing-functor $\Pi$ to $\mc{X}$ and defining $S(M, \Pi \mc{X}):=\Spec_M \on{Sym}^{\bullet} \Pi \mc{X}$.  
\newline

\begin{example} \normalfont  The supermanifold  $S(M, J/J^2)$ determined by the pair $(M, J/J^2)$ is the normal bundle, $N_{M/X}$, to the embedding $M \subset X$. To see this, note that $J/J^2$ is the conormal sheaf associated to the embedding $M \subset X$. 
\end{example}

We say that $X$ is \emph{split} if there exists an isomorphism  $\phi: X \cong N_{M/X}$ of supermanifolds such that the restriction $\phi \vert_M$ is equal to the identity on $M$. A weaker condition than being split is being projected: The supermanifold $X$ is said to be \emph{projected} if the embedding $M \subset X$ has a projection $X \to M$. 
In terms of coordinates and transition functions, a supermanifold is projected if its even coordinates modulo nilpotents transform like coordinates on $M$, and 
it is split if, in addition, its odd coordinates transform like sections of the normal bundle $N_{M/X}$.

\begin{example} \label{splitex} \normalfont All affine superspaces $\mathbb{A}^{m|n}$ and all projective superspaces $\mathbb{P}^{m|n}$ are split. Specifically, \begin{align*}
    \aff^{m|n} & = \on{Spec}_{\aff^m} \on{Sym}^{\bullet} \Pi \mc{O}_{\aff^m}^{\oplus n} \\ 
    \pp^{m|n} & = \on{Spec}_{\pp^m} \on{Sym}^{\bullet}  \Pi \mc{O}(1)^{\oplus n}.
\end{align*} 
 
\end{example}

\begin{example}\normalfont 
    \label{oneonesplit} Every supermanifold of dimension $(m|1)$ is split.  
More generally,  the \emph{first truncation} $X^1:=(|X|, \ox/J^2)$ of any supermanifold $X$ is  split. 
\end{example}

In general, a projected supermanifold need not be split. However, there are some special cases where the two notions are equivalent: 
\begin{example} \label{trunc} \normalfont A supermanifold of dimension $(m|2)$ is split if and only if it is projected. More generally, the second truncation $X^2:=(|X|, \ox/J^3)$ of any supermanifold $X$ is split if and only if projected. 
This is (roughly) because the transformation laws for the odd coordinates on $X^2$ remain unchanged from those on the split supermanifold $X^1$ when ``adding back" variables in $J^2/J^3$. 
\end{example} 

The truncations $X^1$ and $X^2$ are the first two terms in a finite sequence of truncations,  $X^i:=(|X|, \ox/J^{i+1})$ which approximate $X$ in the sense of the embeddings \[ M \subset X^1 \subset \cdots X^i \subset \cdots \subset X. \] 

A supermanifold $X$ is projected or split if and only if all of its truncations $X^i$ are projected or split. 
Since $X^1$ is always split, the first interesting case is $X^2$, the second truncation.  $X^2$ determines a class $ \omega_2 \in H^1(M, \mc{T}_M \otimes J^2/J^3)$ called the \emph{first obstruction to splitting}. Indeed, the surjection $\ox/J^3 \to \ox/J^2=\om \oplus J/J^3$ making $X^2$ a square-zero extension of $X^1$ is an isomorphism on the odd components, and the conormal sequence associated to $(\ox/J^3)^+ \to \om$, 
 \[ 
    0 \to J^2/J^3 \to \Omega_{X^2}^1 \vert_M \to \Omega_M^1 \to 0,  \] 
identifies $X^2$ with a class $\omega_2$ in $\on{Ext}^1(\Omega_M^1, J^2/J^3) \cong H^1(M, \mc{T}_M \otimes J^2/J^3)$ such that $\omega_2=0$ if and only if $0 \to J^2/J^3 \to (\ox/J^3)^+ \to \om \to 0$ is split exact. In this case,  $X^2$ is projected, and thus also split by example \ref{trunc}. 
\newline

 
It is not difficult to find examples of supermanifolds which are \emph{not} projected.

\begin{example}\label{superconic}  \normalfont The superconic defined by the equation $z_1^2 + z_2^2 + z_3^2 + \theta_1 \theta_1$ in $\on{Proj} \bb[z_1, z_2, z_3 | \theta_1, \theta_2]$ is not projected. 
    
\end{example} 

\begin{example}
    \label{firstorder} \normalfont  Every non-trivial \emph{odd} first order deformation of a super Riemann surface supermanifold is not projected, see \cite{donagi2015supermoduli}. 
\end{example}

The following two Corollaries  from \cite{donagi2015supermoduli} can be used to construct a large class of non-projected supermanifolds. 

\begin{corollary}[Corollary 2.8] \label{twoeight} Let $\pi: \tilx \to X$ be a finite cover of supermanifolds. If $\omega_2(X) \neq 0$, then $\omega_2(\tilx) \neq 0$. Furthermore, $X$  is split if and only if $\tilx$ is split. 

\end{corollary}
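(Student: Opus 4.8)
The plan is to deduce both assertions from two facts: the \emph{functoriality} of the obstruction class $\omega_2$ under the cover, and the \emph{injectivity} of pullback on $H^1$ along a finite map in characteristic zero. Recall that $\omega_2(X) \in H^1(M, \mc{T}_M \otimes J^2/J^3)$ is the extension class of the conormal sequence of $X^2$, and that it is canonically defined because $X^1$ is canonically split. Write $M = X_{\bos}$, $\tilde M = \tilx_{\bos}$, and let $\tilde\pi : \tilde M \to M$ be the induced finite cover of bosonic reductions. Since $\pi$ is a cover it is an isomorphism in the odd directions, so $d\pi$ identifies $\tilde\pi^*(J_X/J_X^2) \cong J_{\tilx}/J_{\tilx}^2$ and hence, after applying $\Lambda^2$, identifies $\tilde\pi^*(J_X^2/J_X^3) \cong J_{\tilx}^2/J_{\tilx}^3$; it also supplies a natural map $\mc{T}_{\tilde M} \to \tilde\pi^* \mc{T}_M$.

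First I would establish naturality. Pulling back the conormal sequence of $X^2$ along $\tilde\pi$ and comparing it with the conormal sequence of $\tilx^2 := \tilx \times_X X^2$ shows that $\omega_2(\tilx)$ is carried to $\tilde\pi^* \omega_2(X)$ under the homomorphism $H^1(\tilde M, \mc{T}_{\tilde M} \otimes J_{\tilx}^2/J_{\tilx}^3) \to H^1(\tilde M, \tilde\pi^*(\mc{T}_M \otimes J_X^2/J_X^3))$ induced by $d\pi$; this is the essential bookkeeping step. Second, over $\bb$ the degree $n = \deg \pi$ is invertible, and since $\tilde\pi$ is finite flat, for any coherent sheaf $\mc{F}$ on $M$ the trace map factors multiplication by $n$ as $\mc{F} \to \tilde\pi_* \tilde\pi^* \mc{F} \xrightarrow{\on{tr}} \mc{F}$. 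As $\tilde\pi_*$ is exact, the composite $H^1(M, \mc{F}) \xrightarrow{\tilde\pi^*} H^1(\tilde M, \tilde\pi^* \mc{F}) \xrightarrow{\on{tr}} H^1(M, \mc{F})$ is multiplication by $n$, so $\tilde\pi^*$ is injective. Combining the two: if $\omega_2(X) \neq 0$ then $\tilde\pi^* \omega_2(X) \neq 0$, and as $\omega_2(\tilx)$ surjects onto this class it is nonzero, proving the first assertion; the same injectivity run in reverse gives $\omega_2(\tilx) = 0 \Rightarrow \omega_2(X) = 0$.

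For the splitness equivalence, the direction $X \text{ split} \Rightarrow \tilx \text{ split}$ is immediate: a splitting of $X$ is a filtered grading of $\mc{O}_X$ over $\mc{O}_M$, and this pulls back to $\mc{O}_{\tilx}$. The converse is proved by induction up the tower $M \subset X^1 \subset X^2 \subset \cdots$, transferring the higher obstruction classes $\omega_4, \omega_6, \dots$ and splitting one truncation at a time: a chosen splitting of $X^{k-1}$ pulls back to one of $\tilx^{k-1}$, with respect to which $\omega_k(\tilx) = \tilde\pi^* \omega_k(X)$; the hypothesis that $\tilx$ is split places $\omega_k(\tilx)$ in the image of the coboundary $\delta_{\tilx}$ recording the freedom in the lower splitting, and applying the trace together with the compatibility $\tilde\pi^* \circ \delta_X = \delta_{\tilx} \circ (\text{restriction})$ forces $\omega_k(X)$ into the image of $\delta_X$, i.e. $X^k$ is split.

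I expect the main obstacle to be precisely this converse direction. Unlike $\omega_2$, which is canonical, the higher obstructions are only defined after choosing splittings of the lower truncations, so one cannot simply invert $\tilde\pi^*$ class by class; one must carry the trace argument through the torsor of choices and check that $\on{tr}$ commutes with the relevant connecting maps. A secondary technical point is handling the ramification of $\pi$ in the even directions, which is exactly why it matters that the odd conormal bundles match under $\tilde\pi^*$ and that the projection formula still delivers the factorization of multiplication by $n$ despite branching.
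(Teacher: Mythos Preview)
The paper does not supply its own proof of this corollary: it is quoted verbatim as Corollary~2.8 from \cite{donagi2015supermoduli}, so there is no in-paper argument to compare your proposal against. Your approach---naturality of $\omega_2$ under pullback combined with the trace/averaging splitting of $\tilde\pi^*$ on $H^1$ for a finite flat map in characteristic zero---is exactly the argument given in the cited source, so in that sense you have reconstructed the intended proof.

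One remark on your write-up: the assertion that ``$\pi$ is a cover so it is an isomorphism in the odd directions'' is precisely the hypothesis being used, and in \cite{donagi2015supermoduli} it is built into the definition of a cover of supermanifolds (same odd dimension, $d\pi$ an isomorphism on normal bundles of the bosonic reductions). You should state this explicitly rather than derive it, since for a general finite surjection of supermanifolds it can fail. Your handling of the converse splitness direction is also faithful to the original: Donagi--Witten run the same induction on truncations, and the delicacy you flag---that the higher $\omega_k$ depend on the chosen splitting of $X^{k-1}$ and one must trace through the torsor of such choices---is exactly the content of their Lemma~2.7 and the surrounding discussion. So your anticipated ``main obstacle'' is real but already resolved in the reference.
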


\begin{corollary}[Corollary 2.11] \label{twoeleven} Let $X$ be a supermanifold and suppose $X' \subset X$ is a sub-supermanifold with bosonic reduction $M' \subset M$ such that the normal sequence, 
\[ 0 \to TM \to TM \vert_{M'} \to N_{M'/M} \to 0 \]
is split exact. If $X'$ is not projected, then $X$ is not projected. 
\end{corollary}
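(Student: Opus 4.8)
The plan is to prove the contrapositive: assuming $X$ is projected, I will show that $X'$ is projected. Recall from the discussion above that $X$ is projected if and only if all of its truncations $X^i$ are, and that projectedness of $X^2$ is measured by the primary obstruction $\omega_2(X) \in H^1(M, \mc{T}_M \otimes J^2/J^3)$. The entire content of the corollary is a naturality statement: the obstruction classes of $X'$ are obtained from those of $X$ by restriction to $M'$ followed by a projection onto the intrinsic tangent and odd--conormal data of $X'$, and it is precisely the splitting of the normal sequence that furnishes the tangential half of this projection. I would spell this out for $\omega_2$, which displays the mechanism in full and is all that the applications in this paper require.

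First I would assemble the comparison map on cohomology. Writing $J' \subset \mc{O}_{X'}$ for the odd ideal of $X'$, there are three natural ingredients on $M'$: (i) the restriction homomorphism $H^1(M, \mc{T}_M \otimes J^2/J^3) \to H^1(M', (\mc{T}_M \otimes J^2/J^3)\vert_{M'})$; (ii) the surjection of odd conormals $(J/J^2)\vert_{M'} \twoheadrightarrow J'/J'^2$ induced by the restriction $\mc{O}_X \to \mc{O}_{X'}$, which carries $J$ into $J'$, whence a surjection $(J^2/J^3)\vert_{M'} \twoheadrightarrow J'^2/J'^3$ on second graded pieces; and (iii) the projection $\mc{T}_M\vert_{M'} \to \mc{T}_{M'}$ coming from the hypothesis that the normal sequence is split exact. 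Tensoring (ii) with (iii) and composing with (i) yields
\[ \Phi \colon H^1(M, \mc{T}_M \otimes J^2/J^3) \to H^1(M', \mc{T}_{M'} \otimes J'^2/J'^3). \]
Ingredient (iii), and with it the whole of $\Phi$, cannot be defined without the splitting: the restricted tangent sheaf $\mc{T}_M\vert_{M'}$ only maps \emph{out of} $\mc{T}_{M'}$, never canonically onto it, so a retraction must be supplied externally, and this is exactly the role played by the split normal sequence.

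The key step is then the identity $\Phi(\omega_2(X)) = \omega_2(X')$. I would verify it with a \v{C}ech computation over a cover of $M$ by charts adapted both to the inclusion $M' \subset M$ and to a chosen splitting of the normal sequence, in which $X$ is presented by even and odd coordinates whose restrictions give coordinates on $X'$. In such charts the cocycle representing $\omega_2(X)$ — extracted from the conormal sequence $0 \to J^2/J^3 \to \Omega^1_{X^2}\vert_M \to \Omega^1_M \to 0$ of the excerpt — restricts termwise to the cocycle for $\omega_2(X')$, once its tangential part is pushed forward along $\mc{T}_M\vert_{M'} \to \mc{T}_{M'}$ and its odd part along $(J^2/J^3)\vert_{M'} \to J'^2/J'^3$. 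Granting the identity, projectedness of $X$ gives $\omega_2(X)=0$, hence $\omega_2(X')=\Phi(0)=0$, so $X'^2$ is projected. The same restrict--and--project mechanism applies at each level of the tower $M' \subset X'^1 \subset X'^2 \subset \cdots$, with the global projection $X \to M$ furnishing a compatible system of splittings that keeps the higher obstructions aligned; thus every obstruction of $X'$ is the image of a vanishing obstruction of $X$, and $X'$ is projected.

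The main obstacle is the naturality identity $\Phi(\omega_2(X)) = \omega_2(X')$ itself: one must check that the extension class defining $\omega_2$ is genuinely functorial for the immersion $X' \subset X$, and isolate the single point where the hypothesis is indispensable, namely that the $\mc{T}_M$--factor of $\omega_2(X)$ admits no canonical restriction to $\mc{T}_{M'}$ except through the retraction provided by the split normal sequence. A secondary, more bookkeeping--heavy obstacle is the passage to the full truncation tower in the general statement, where ensuring that the higher obstruction classes of $X'$ are images of those of $X$ requires propagating one coherent system of splittings coming from the global projection $X \to M$; for the purposes of Theorem \ref{main:intro}, however, the primary class $\omega_2$ already suffices.
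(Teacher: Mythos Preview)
The paper does not prove this statement: it is quoted verbatim as Corollary~2.11 from \cite{donagi2015supermoduli}, with no argument given here. So there is no in-paper proof to compare your proposal against.

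That said, your obstruction-theoretic approach is the right one and matches what is done in the cited source. The construction of the comparison map $\Phi$ out of restriction, the conormal surjection $(J^2/J^3)\vert_{M'} \twoheadrightarrow J'^2/J'^3$, and the tangential retraction $\mc{T}_M\vert_{M'} \to \mc{T}_{M'}$ supplied by the split normal sequence is exactly the mechanism, and your identification of the splitting hypothesis as the one place where a non-canonical choice must be inserted is on target. The naturality identity $\Phi(\omega_2(X)) = \omega_2(X')$ is indeed the substantive step; your \v{C}ech description is adequate as an outline, though in a full write-up one would want to be explicit that the cocycle for $\omega_2$ is built from the degree-two nilpotent corrections to the even transition functions, and that restricting coordinates adapted to $X' \subset X$ and to the chosen normal splitting literally discards the normal-direction components, landing in the cocycle for $\omega_2(X')$.

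Your treatment of the higher truncations is, as you acknowledge, more of a gesture than an argument; the claim that a global projection $X \to M$ furnishes a compatible system of splittings propagating up the tower is plausible but would need care to make precise, since the higher obstructions live in cohomology groups with more complicated coefficients and the inductive step is not purely formal. You are correct, however, that for every application in this paper the vanishing of $\omega_2$ is all that is ever invoked (via Corollary~\ref{twoeight} and the known non-vanishing of $\omega_2(\frak{M}_{2,1})$), so the $\omega_2$-level argument you give in full already covers what is actually used.
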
  

\noindent \textbf{Example.} Let us apply Corollary \ref{twoeleven}  to the superconic $X \subset \pp^{2|2}$ from  example \ref{superconic}. Since $X$ is not projected, the normal sequence associated to the embedding $X_{bos} \subset \pp^2$ is not split exact by Corollary \ref{twoeleven}.  
\newline

\section{Preliminaries: Supercurves, divisors on supercurves, super Riemann surfaces and their punctures } \label{prelim}

\subsection{Supercurves}

A \emph{supercurve} $S$ is a compact, connected supermanifold of dimension $(1|1)$. The bosonic reduction of a supercurve is an ordinary projective curve.

\begin{example} \normalfont
    The superprojective line $\mathbb{P}^{1|1}$ is a supercurve whose bosonic reduction is the ordinary projective line, $\mathbb{P}^1$.
\end{example}

A \emph{family of supercurves} over a supermanifold $T$ is a smooth, proper morphism of supermanifolds $\pi: S \to T$ of relative dimension $(1|1)$. The bosonic reduction of $\pi: S \to T$ is the family of ordinary curves, $\pi_{bos}: S_{bos} \to T_{bos}$.
\begin{example} \normalfont An example of a family of supercurves is  $\mathbb{P}^{1|1} \times T \to T$, where $T$ can be any supermanifold. Its bosonic reduction is $\mathbb{P}^1 \times T_{bos} \to T_{bos}$.
\end{example}

In general, we write $S$ for a family of supercurves and make no explicit mention of the base supermanifold.  We will sometimes use the phrase ``in the split case" to indicate that the base of a family is an ordinary scheme. If the base is $\Spec \bb $, then we will refer to $S$ as a ``single" supercurve. 
\newline

Any topological invariant assigned to a supermanifold is determined by the bosonic reduction of that supermanifold.  In particular, the genus of a supercurve is the genus of its bosonic reduction.


\subsection{Divisors on supercurves}

Our treatment of divisors on supermanifolds follows the style of Chapter 6 in \cite{hartshorne2013algebraic}. 
\newline 

Let $X$ be a supermanifold and let $\mc{K}$ denote the sheaf of total quotient superrings (see \cite{hartshorne2013algebraic} for the ordinary definition).
A \emph{Cartier divisor}  on  $X$ is a global section of $\mc{K}^*/\mc{O}_X^*$, where  $\mc{K}^* \subset \mc{K}$ is the subsheaf of invertible elements in $\mc{K}$. As in the ordinary case, a Cartier divisor on $X$ can be described by a collection $f_i \in \Gamma(U_i, \mc{K}^*)$ on an open cover $U_i$ of $X$ such that for all $i,j$, $f_i/f_j \in \Gamma(U_i \cap U_j, \mc{O}_X^*)$.  
\newline

\noindent \textbf{Effective and prime divisors} A  Cartier divisor is said to be \emph{effective} if $f_i \in \Gamma(U_i, \mc{O}_{U_i,+})$ for all $i$. Every effective Cartier divisor on $X$ has an associated closed sub-supermanifold $\mc{P} \subset X$ of codimension $1|0$ defined by the ideal sheaf $\mc{I}_{\mc{P}} \subset \mc{O}_X$ locally generated by the $f_i$. We say that $\mc{P} \subset X$ is a \emph{prime divisor}  if its bosonic reduction $P=\mc{P}_{bos}$ is a prime divisor  on $M=X_{bos}$. 

\begin{example} \normalfont Let $(z, \theta)$ be a choice of local coordinates on a supercurve $S$ over a base with one odd coordinate $\eta$. The divisor defined by the equation $z - z_0-\theta \eta=0$ is a prime divisor on $S$.
\end{example}

 \begin{lemma} \label{trivialven} On any supermanifold $X$, $\mc{O}_X^*=\mc{O}_{X,+}^*$ and $\mc{K}^* = \mc{K}_+^*$. In particular, a one-to-one correspondence exists between Cartier divisors on $X$ and Cartier divisors on $X_{ev}$. 

 \end{lemma}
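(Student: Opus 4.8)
The plan is to reduce everything to the multiplicative behaviour of the even part of a section, exploiting that the odd functions are nilpotent. Working locally, let $R = R_+ \oplus R_-$ be the supercommutative ring of sections over an open set, with $J \subset R$ the ideal generated by $R_-$; since $X$ is a supermanifold, $J$ is locally nilpotent and $R/J = \mc{O}_M$. The first step is the key algebraic fact: a section $a = a_+ + a_-$ lies in $\mc{O}_X^*$ if and only if its even part $a_+$ lies in $\mc{O}_{X,+}^*$. Indeed, if $a_+$ is an even unit then $a = a_+(1 + a_+^{-1}a_-)$, and $a_+^{-1}a_-$ is an odd, hence nilpotent, element, so $1 + a_+^{-1}a_-$ is invertible (its inverse is the finite geometric series) and $a$ is a unit. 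Conversely, reducing a relation $ab = 1$ modulo $J$ shows that the image of $a_+$ is invertible in $\mc{O}_M$, and since $\ker(R_+ \to \mc{O}_M)$ is nilpotent this forces $a_+ \in R_+^*$. This simultaneously shows that every unit is its even part times a factor congruent to $1$ modulo $J$.

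The second step is to run the identical argument over the sheaf of total quotient superrings. The ideal $J$ generates a nilpotent ideal in $\mc{K}$ (localisation preserves nilpotence), so the same dichotomy gives $f \in \mc{K}^*$ if and only if $f_+ \in \mc{K}_+^*$, together with a factorisation $f = f_+ \cdot u$ with $u \equiv 1 \bmod J$ and $u \in \mc{O}_X^*$. This is the content of the two displayed equalities, read as: the even-part map identifies units (resp. invertible rational sections) with even units (resp. invertible even rational sections) up to the unipotent factor $1 + (\text{odd nilpotent})$.

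The third step deduces the correspondence of Cartier divisors. Given a divisor described by $\{(U_i, f_i)\}$ on $X$, replace each $f_i$ by its even part $f_{i,+} \in \Gamma(U_i, \mc{K}_+^*)$; by Step 2 this changes $f_i$ only by a factor in $\mc{O}_X^*$, so it defines the same divisor, while the new transition functions $f_{i,+}/f_{j,+}$ are even and invertible, hence lie in $\Gamma(U_i \cap U_j, \mc{O}_{X,+}^*)$ — that is, the data descend to a Cartier divisor on $X_{ev}$. Conversely, a Cartier divisor given by even data is already a Cartier divisor on $X$, and these two passages are mutually inverse because an even rational section that becomes a unit multiple of another even section must differ from it by an even unit (again by Step 1 applied to the even quotient). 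This yields the asserted bijection.

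The main obstacle — and the point to handle with care rather than wave through — is that $\mc{O}_X^*$ genuinely contains non-even units such as $1 + \theta$, so the equality $\mc{O}_X^* = \mc{O}_{X,+}^*$ cannot be read as an equality of subsheaves of $\mc{O}_X$; it must be understood via the even-part map of Steps 1--2. The real work is therefore to verify that these extra unipotent units neither produce new divisor classes nor identify distinct even ones, which is exactly the well-definedness and injectivity checked in Step 3. A secondary subtlety is that the full unit sheaf $\mc{O}_X^*$ is non-abelian (odd parts anticommute), so it is only after passing to even representatives that the usual abelian-group formalism of Cartier divisors applies; establishing that this passage is canonical is what makes the correspondence meaningful.
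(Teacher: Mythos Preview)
Your argument is correct, but it takes a genuinely different route from the paper's. The paper's proof is one line: it assumes (tacitly, via the phrase ``by homogeneity'') that a section $f$ is a \emph{homogeneous} element of the $\z_2$-graded ring, and then observes that a homogeneous unit must be even because odd elements are nilpotent. Under that convention the literal equality $\mc{O}_X^* = \mc{O}_{X,+}^*$ holds as stated, the quotient $\mc{K}^*/\mc{O}_X^*$ is automatically a sheaf of abelian groups, and there is nothing more to check.

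You instead allow inhomogeneous sections, correctly note that then $1+\theta$ is a non-even unit so the equality cannot be literal, and reinterpret the statement as an isomorphism induced by the even-part projection. Your Steps~1--3 carry this out cleanly, and your remark that the full unit group is non-abelian (so that the Cartier-divisor formalism only becomes well-posed after passing to even representatives) is a genuine point the paper's convention silently avoids. What you gain is robustness against the choice of convention and an explanation of \emph{why} the homogeneity restriction loses nothing at the level of divisors; what the paper's approach gains is brevity, since under its reading the lemma is essentially a tautology about graded rings.
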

 \begin{proof} A section $f \in \mc{O}_X$ 
  is invertible if and only if its image $\ov{f}$ in $\mc{O}_X/J$ is invertible. Therefore, if $f \in \mc{O}_X^*$, then $\ov{f} \neq 0$ and, by homogeneity, $f$ is even since $\ov{f} \neq 0$ is  even. 
 \end{proof}

\begin{corollary} \label{evensame}  Let $S$ be a family of supercurves over an ordinary scheme. Then there is a one-to-one correspondence between Cartier divisors on $S$ and Cartier divisors on the bosonic reduction $C$ of $S$. In particular, every Cartier divisor on $C$ extends uniquely to a Cartier divisor on $S$.  
    
\end{corollary}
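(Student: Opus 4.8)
The plan is to reduce everything to Lemma \ref{trivialven}, which already supplies a bijection between Cartier divisors on a supermanifold $X$ and Cartier divisors on its even part $X_{ev} = (|X|, \mc{O}_{X,+})$. The only thing left to do is to identify $S_{ev}$ with the bosonic reduction $C$; once that identification is in hand, the corollary is just Lemma \ref{trivialven} applied to $X = S$.

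First I would record the dimension count. Since the base is an ordinary scheme, it carries no odd coordinates, and the fibers have relative dimension $(1|1)$, so the total space $S$ has odd dimension $1$. By Example \ref{oneonesplit} every supermanifold of dimension $(m|1)$ is split, so $S$ is split over $C$: locally $\mc{O}_S \cong \mc{O}_C \oplus \Pi L$ for a line bundle $L$ on $C$, with $J = \Pi L$ and $J^2 = 0$. Concretely, in local coordinates there is a single odd generator $\theta$, and $\theta^2 = 0$ by the sign rule, which is what forces $J^2 = 0$.

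From $J^2 = 0$ I would then conclude that the even part of the structure sheaf acquires no nilpotents from products of odd elements, so that $\mc{O}_{S,+} = \mc{O}_C$; equivalently, $S_{ev} = C$ as ringed spaces. Substituting this identity into Lemma \ref{trivialven} yields the asserted one-to-one correspondence between Cartier divisors on $S$ and Cartier divisors on $C$. The ``in particular'' clause is then immediate: the inverse of this bijection sends each Cartier divisor on $C$ to its unique extension over $S$.

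The one step requiring genuine care, and the closest thing to an obstacle, is the identification $\mc{O}_{S,+} = \mc{O}_C$, which is precisely where the hypothesis that the base is ordinary is used. Over a base carrying odd coordinates this fails: products of odd base directions with the odd fiber direction produce even nilpotents, $J^2$ becomes nonzero, and $S_{ev}$ genuinely differs from $C$, so the correspondence breaks down. I would therefore isolate this use of the hypothesis explicitly, emphasizing that the conclusion is special to families of supercurves over ordinary schemes and does not persist over a general super base.
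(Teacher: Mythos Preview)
Your proposal is correct and follows essentially the same route as the paper: both reduce to Lemma \ref{trivialven} after identifying $S_{ev}$ with $S_{\mathrm{bos}}=C$. The paper states the key identity $\mc{O}_{S,+}=\mc{O}_S/J$ in one line, while you justify it more explicitly via the splitness of $(m|1)$-dimensional supermanifolds and the resulting vanishing $J^2=0$; this is the same observation unpacked, not a different argument.
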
 

\begin{proof} Over an ordinary scheme, $S_{bos}=S_{ev}$ since $\mc{O}_{S,+}=\mc{O}_S/J$. Now apply Lemma \ref{trivialven}.
    
\end{proof}


\subsubsection{Points and divisors}

A \emph{marked point} on a family of supercurves $\pi: S \to T$ is the image of a section $s: T \to S$.  When $T= \Spec \bb$, marked points are the same as closed points. 
 \newline

\begin{example} \normalfont Consider the (trivial) family $\pi: \mathbb{A}^{1|1} \times \mathbb{A}^{0|1}  \to \mathbb{A}^{0|1}$, and choose coordinates
 \[ \mathbb{A}^{1|1} \cong \Spec \bb[z, \theta], \ \ \mathbb{A}^{0|1} \cong \Spec \bb[\eta] \]
 where the isomorphism on the right is for both copies of $\aff^{0|1}$. 
 A marked point $s: \mathbb{A}^{0|1} \to \mathbb{A}^{1|1} \times \mathbb{A}^{0|1} $ is a morphism of superrings
 \begin{equation*}
      \bb[\eta][z, \theta] \to \bb[\eta]:  
  \eta  \mapsto \eta, 
  z   \mapsto a, 
  \theta  \mapsto b \eta 
 \end{equation*}
 for some $a,b \in \bb$.

 \end{example}

\begin{lemma}
    \label{extended} Let $S$ be a family of supercurves over a purely bosonic scheme. Then there is a one-to-one correspondence between prime divisors on $S$ and marked points on $S$. 
\end{lemma}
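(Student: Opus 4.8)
The plan is to reduce the statement to the classical correspondence between marked points and prime divisors on the ordinary family of curves $C := S_{\bos}$, using two facts that hold precisely because the base $T$ is purely bosonic. On the one hand, by Corollary \ref{evensame} every Cartier divisor on $C$ extends uniquely to a Cartier divisor on $S$, and this extension preserves effectivity and has the original divisor as its bosonic reduction; since prime-ness of a divisor on $S$ is by definition a condition on its bosonic reduction, prime divisors on $S$ are thereby in bijection with prime divisors on $C$. On the other hand, I will show that every section $T \to S$ factors through the canonical embedding $C \hr S$, so that marked points on $S$ coincide with marked points on $C$. Granting both, the lemma follows by composing with the standard correspondence (marked points) $\leftrightarrow$ (prime divisors) on the smooth family of ordinary curves $C \to T$: a section $T \to C$ cuts out, locally by a single equation $z - z_0$, an effective relative Cartier divisor finite of degree one over $T$ (a prime divisor), and conversely such a divisor is the image of its structural section.

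The genuinely supergeometric step is the factoring of sections. Let $s \colon T \to S$ be a section of $\pi$. Its comorphism $s^\# \colon \mc{O}_S \to s_* \mc{O}_T$ is a homomorphism of sheaves of superrings and therefore respects the $\z_2$-grading. Since $T$ is a purely bosonic scheme, $(s_* \mc{O}_T)_- = 0$, so $s^\#$ annihilates the odd part $\mc{O}_{S,-}$ and hence the ideal $J$ that it generates. Consequently $s^\#$ factors through $\mc{O}_S / J = \mc{O}_C$, i.e. $s = \iota \circ s_0$ for a unique section $s_0 \colon T \to C$ of $\pi_{\bos}$, where $\iota \colon C \hr S$ is the canonical embedding. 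Conversely, composing any section of $\pi_{\bos}$ with $\iota$ yields a section of $\pi$, and these operations are mutually inverse, with the image of $s$ in $S$ equal to the image of $s_0$ in $C$. This is exactly where the hypothesis is essential: over a base carrying an odd coordinate $\eta$ a section may send $\theta \mapsto b\eta$ and thus fail to factor through $C$, and correspondingly prime divisors acquire odd terms such as $z - z_0 - \theta \eta$, so the clean correspondence breaks down.

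Assembling the three bijections gives the explicit correspondence: a marked point $m \subset S$, being the image of a section, lies in $C$ and there defines a prime divisor $P_0 \subset C$; extending $P_0$ via Corollary \ref{evensame} produces the prime divisor $\mc{P} \subset S$ associated to $m$. In the reverse direction, a prime divisor $\mc{P} \subset S$ has bosonic reduction $\mc{P}_{\bos} = P_0$, a prime divisor on $C$, which is the image of a unique section $s_0$, and $\iota \circ s_0$ recovers the marked point. The main subtlety I would take care to address — and the point I expect to be the real obstacle — is that a marked point has relative codimension $(1|1)$ whereas a prime divisor has relative codimension $(1|0)$, so the bijection is \emph{not} ``image of a section equals divisor''; it passes through the bosonic reduction, where points and divisors on a curve genuinely coincide, and then re-extends to $S$. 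Checking that these passages are mutually inverse, together with the classical degree-one statement on $C \to T$, is the only remaining and essentially routine work.
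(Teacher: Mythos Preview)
Your proposal is correct and follows essentially the same three-step approach as the paper's proof: sections factor through $C=S_{\bos}$ because $T$ is bosonic, marked points on $C$ coincide with prime divisors on $C$ classically, and prime divisors on $C$ extend uniquely to $S$ by Corollary~\ref{evensame}. You supply more detail than the paper does---in particular the explicit grading argument for the factoring of sections and the remark on the $(1|1)$ versus $(1|0)$ codimension discrepancy---but the structure and key inputs are identical.
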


\begin{proof}
    Suppose $T$ is purely bosonic. Then any marked point $s: T \to S$ on $S$ must factor through $C=S_{bos}$, and so we have a one-to-one correspondence between marked points on $C$ and marked points on $S$. On the other hand, marked points on $C$ are the same as prime divisors on $C$, and every prime divisor on $C$ extends uniquely to $S$ by Corollary \ref{evensame}. 
\end{proof}

However, there is \emph{no} duality between prime divisors and marked points on a supercurve over a general superscheme. In the next section we will show that the duality is restored,  regardless of the base superscheme, in the presence of a superconformal structure. 



    


    

\subsection{Super Riemann surfaces}
 A super Riemann surface $X=(S, \mc{D})$ is a supercurve $S$ equipped with a rank $(0|1)$ distribution $\mc{D} \subset \mc{T}_S$ which is maximally non-integrable in the sense of the following short exact sequence, 
\begin{equation} \label{definition}
    0 \to \mc{D} \to \mc{T}_{S/T} \to \mc{D}^{\otimes 2} \to 0.
\end{equation} 
An isomorphism of super Riemann surfaces $\phi: X \to X'$ is an isomorphism $\phi: S \to S'$ of supercurves such that $\phi^* \mc{D}'=\mc{D}$. 
\newline

The distribution $\mc{D}$ is called a superconformal structure, and there always exists a special choice of local coordinates $(z, \theta)$ on $S$ such that
 $\mc{D}$ is locally generated by the odd vector field, 
 \begin{equation} \label{localform} D = \frac{\partial}{\partial \theta} + \theta \frac{\partial}{\partial z}. \end{equation}
 \newline 

\noindent \emph{Duality.} In the last section we define a duality between points and divisors on a supercurve over a purely bosonic scheme, and noted that this duality does not hold over general superschemes. It turns out that on super Riemann surfaces the defined duality holds \emph{in general}, i.e., with no condition on the base superscheme. This is because the maximal non-integrability condition on $\mc{D}$ ensures that there exists a unique point on each  divisor tangent to $\mc{D}$. 

More precisely, let $p=(z_0, \theta_0)$ be a point on $S$, and let $v_{p}:= D \vert_{p}$ be the restriction of the local generator \eqref{localform} of $\mc{D}$ to the point $p$. The pair $(p, v_{p})$ determines a  prime divisor $P$ on $S$ which is locally defined by the equation $z-z_0 - \theta \theta_0=0$. By construction, $\mc{D}$ is tangent to $P$ at $p \in P$, and the maximal non-integrability condition on $\mc{D}$ ensures that $p$ is the unique point on $P$ tangent to $\mc{D}$. 
\newline 

\noindent \emph{Superconformal vector fields.}  Let $X=(S,\mc{D})$ be a super Riemann surface, and let $I$ be an $\mc{O}_S$-module. 
A superderivation $\delta: \os \to I$ is a $k$-linear map, satisfying the super Leibniz rule
\begin{equation}\delta(ab)=\delta(a)b + (-1)^{|\delta||a|} \ a  \delta(b), \ a,b \in I \end{equation} We can organize the set of derivations into a sheaf of super vector spaces on $S$, 
\[ \underline{\on{Der}}_k(\mc{O}_S,I)= \on{Der}_k(\mc{O}_S,I) \oplus \on{Der}_k(\mc{O}_S,\Pi I). \]
where $\on{Der}_k(\mc{O}_S,I)$ is the space of all grading-preserving derivations, and $\on{Der}_k(\mc{O}_S,\Pi I)$ is the space of grading-reversing derivations. The sheaf $\underline{\on{Der}}_k(\mc{O}_S,\os)$ is the tangent sheaf $\mc{T}_S$ on $S$, and $\mc{T}_S \otimes I=\underline{\on{Der}}_k(\mc{O}_S,I)$ where
\[ (\mc{T}_S \otimes I)^+= \on{Der}_k(\mc{O}_S,I), \ \text{and} \  (\mc{T}_S \otimes I)^-=\on{Der}_k(\mc{O}_S,\Pi I). \]

On the super Riemann surface $X$, we can consider the subsheaf  $\underline{\on{Der}}_k^{scf}(\mc{O}_X,\ox) \subset \underline{\on{Der}}_k(\mc{O}_X,\ox)=\mc{T}_X$ of derivations  preserving the superconformal structure $\mc{D}$.  A derivation $V \in \mc{T}_X$ is in $\underline{\on{Der}}_k^{scf}(\mc{O}_X,\ox)$ if and only if 
$ [V, \mc{D}] \in \mc{D}$. Henceforth we set $\mc{A}_X:=\underline{\on{Der}}_k^{scf}(\mc{O}_X,\ox)$. The maximal non-integrability condition on $\mc{D}$ implies that $V \in \mc{T}_X/\mc{D}$, and \begin{equation} \label{naked} \mc{A}_X \cong \mc{T}_X/\mc{D} \cong \mc{D}^2.  \end{equation}    If $I$ is an $\ox$-module, then $\derscf(\ox,I) \cong \mc{A}_X \otimes I$, where
\[ (\mc{A}_X \otimes I)^+= \on{Der}(\mc{O}_X,I), \ \text{and} \  (\mc{A}_X \otimes I)^-=\on{Der}(\mc{O}_X,\Pi I).\] 
Here we are using the fact that  $\mc{A}_X$ inherits the structure of a sheaf of $\ox$-modules from $\mc{D}^2$, and so the tensor product $\mc{A}_X \otimes_{\ox} -$ is well-defined.
\newline 

\begin{lemma} \label{technical} $I \cong \ox \oplus \Pi \ox$, then \begin{align*}
\on{Der}_k(\ox,I) & \cong \underline{\on{Der}}_k(\ox, \ox) \cong \mc{T}_X \\
\on{Der}_k^{scf}(\ox,I) & \cong \underline{\on{Der}}_k^{scf}(\ox, \ox) \cong \mc{A}_X
\end{align*}
\end{lemma}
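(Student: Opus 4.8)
The plan is to deduce the statement formally from the two identifications recorded immediately before it, namely $\mc{T}_X \otimes I = \der(\ox, I)$ with even part $(\mc{T}_X \otimes I)^+ = \on{Der}_k(\ox, I)$, and $\derscf(\ox, I) \cong \mc{A}_X \otimes I$ with even part $(\mc{A}_X \otimes I)^+ = \on{Der}_k^{scf}(\ox, I)$. The only further inputs are that the tensor product distributes over the direct sum $I \cong \ox \oplus \Pi\ox$, that $\mc{F} \otimes \Pi\ox \cong \Pi\mc{F}$ for any $\ox$-module $\mc{F}$ (here applied to $\mc{F} = \mc{T}_X$ and $\mc{F} = \mc{A}_X$, the latter being an $\ox$-module via $\mc{D}^2$), and the parity-reversal identity $(\Pi\mc{F})^+ = \mc{F}^-$.

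For the first line I would compute
\begin{equation*}
\mc{T}_X \otimes I \cong (\mc{T}_X \otimes \ox) \oplus (\mc{T}_X \otimes \Pi\ox) \cong \mc{T}_X \oplus \Pi\mc{T}_X,
\end{equation*}
and then pass to even parts:
\begin{equation*}
\on{Der}_k(\ox, I) = (\mc{T}_X \otimes I)^+ = \mc{T}_X^+ \oplus (\Pi\mc{T}_X)^+ = \mc{T}_X^+ \oplus \mc{T}_X^- = \mc{T}_X = \der(\ox,\ox).
\end{equation*}
The conceptual content worth stating is that doubling $\ox$ to the parity-symmetric module $I$ is exactly what upgrades the grading-preserving derivations into $I$ to the entire tangent sheaf: the $\ox$-summand contributes $\mc{T}_X^+$ and the $\Pi\ox$-summand contributes the grading-reversing derivations $\mc{T}_X^-$.

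The second line is the same computation with $\mc{A}_X$ in place of $\mc{T}_X$. Using $\derscf(\ox, I) \cong \mc{A}_X \otimes I \cong \mc{A}_X \oplus \Pi\mc{A}_X$ and taking even parts gives
\begin{equation*}
\on{Der}_k^{scf}(\ox, I) = (\mc{A}_X \otimes I)^+ = \mc{A}_X^+ \oplus \mc{A}_X^- = \mc{A}_X = \derscf(\ox,\ox).
\end{equation*}
Since the lemma is in the end formal, there is no serious obstacle; the one place that demands care—and where I would slow down—is the $\z_2$-grading bookkeeping under the parity-reversal functor, i.e. the identities $(\Pi\mc{T}_X)^+ = \mc{T}_X^-$ and $(\Pi\mc{A}_X)^+ = \mc{A}_X^-$ and the fact that $\mc{A}_X$ carries a genuine $\ox$-module structure so that $\mc{A}_X \otimes \Pi\ox \cong \Pi\mc{A}_X$ is legitimate. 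Everything else is a direct application of the identifications already in hand.
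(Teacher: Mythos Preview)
Your proposal is correct and is essentially identical to the paper's own proof: the paper computes $\on{Der}_k(\ox,I) \cong (\mc{T}_X \otimes_{\ox} (\ox \oplus \Pi\ox))_+ = (\mc{T}_X \oplus \Pi\mc{T}_X)_+ = \mc{T}_X^+ \oplus \mc{T}_X^- = \mc{T}_X$ and leaves the superconformal case implicit. If anything, you are slightly more thorough in spelling out the $\mc{A}_X$ case and the $\ox$-module structure on $\mc{A}_X$ needed for the tensor identity.
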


\begin{proof} \begin{align*}
    \on{Der}_k(\ox,I) & \cong (\mc{T}_X \otimes_{\ox} (\ox \oplus \Pi \ox))_+=(\mc{T}_X \oplus \Pi \mc{T}_X)_+\\
    {} & =\mc{T}_X^+ \oplus (\Pi T_X)^+=T_X^+ \oplus \mc{T}_X^-\\
    {}& =\mc{T}_X
\end{align*}

\end{proof}

\noindent \emph{First order deformations.} A first order deformation of a super Riemann surface $X$ is a family of super Riemann surfaces  $f: X' \to \Spec D$ over the Spec of the super dual numbers $D=\bb[t, \eta]/(t^2, \eta t), |t|=0, |\eta|=1$ making the following diagram commute, 
\begin{equation} \label{deformSRS}
    \begin{tikzcd}
        X \arrow[r] \arrow[d, "f_0"] & X' \arrow[d,"f"] \\
        \Spec \bb \arrow[r] & \Spec D.
    \end{tikzcd}
\end{equation}
We say that a first order deformation $X'$ of $X$ is trivial if there exists an isomorphism $X' \cong X \times \Spec D$ restricting to the identity on $X$. 

\begin{lemma} \label{firstorderSRS}
The set of isomorphism classes of first order deformations  of $X$ is equal to $H^1(X,\mc{A}_X)$. \end{lemma}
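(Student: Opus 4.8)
The plan is to realize first order deformations of $X = (S, \mc{D})$ as a Čech cohomology class via gluing data, following the standard deformation-theoretic template but adapted to the superconformal setting. I would fix a cover $\{U_i\}$ of $X$ by superconformal coordinate charts, so that on each $U_i$ the distribution $\mc{D}$ is locally generated by the field $D$ of \eqref{localform}. Over $\Spec D$, a first order deformation $X'$ is, Zariski-locally, a trivial product $U_i \times \Spec D$ (since an affine supercurve with its superconformal structure is unobstructed and rigid over a point — the truncation $X^1$ is split by Example \ref{oneonesplit}, and locally there is nothing to deform). The content of the deformation is therefore concentrated in the regluing: on overlaps $U_{ij} = U_i \cap U_j$ the two local trivializations differ by an automorphism $\phi_{ij}$ of $U_{ij} \times \Spec D$ that reduces to the identity modulo the maximal ideal $(t, \eta)$ of $D$ and that preserves the (pulled-back) superconformal structure.

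The key step is to linearize these transition automorphisms. Since $D = \bb[t,\eta]/(t^2, \eta t)$ is a square-zero extension of $\bb$, writing $\phi_{ij} = \on{id} + \delta_{ij}$ expresses each regluing as $\on{id}$ plus a derivation $\delta_{ij}$ valued in the ideal $(t,\eta)$. The requirement that $\phi_{ij}$ preserve $\mc{D}$ forces $\delta_{ij}$ to be a \emph{superconformal} derivation, i.e. a section of $\derscf(\ox, \ox)$, which by Lemma \ref{technical} and the identification \eqref{naked} is precisely a section of $\mc{A}_X$ (the even part of $\delta$ coupling to $t$, the odd part to $\eta$, matching the even/odd generators of $I \cong \ox \oplus \Pi \ox$). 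Imposing the cocycle condition $\phi_{ij} \phi_{jk} = \phi_{ik}$ on triple overlaps and expanding to first order yields the additive Čech cocycle relation $\delta_{ij} + \delta_{jk} = \delta_{ik}$, so $\{\delta_{ij}\}$ defines a class in $H^1(X, \mc{A}_X)$. Conversely, every such cocycle prescribes consistent regluing data and hence a deformation. Finally, I would check that changing the local trivializations by automorphisms $\on{id} + \sigma_i$ (with $\sigma_i \in \Gamma(U_i, \mc{A}_X)$) — which is exactly the freedom in an isomorphism $X' \cong X \times \Spec D$ restricting to the identity on $X$ — alters the cocycle by the coboundary $\sigma_i - \sigma_j$, so that isomorphism classes of deformations correspond bijectively to Čech cohomology classes.

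The main obstacle I expect is bookkeeping the $\z_2$-grading correctly throughout, and in particular justifying that the superconformal constraint on $\phi_{ij}$ linearizes to membership in $\mc{A}_X$ rather than in all of $\mc{T}_X$. This is where the maximal non-integrability condition \eqref{definition} does the real work: it is what collapses $\derscf$ onto $\mc{T}_X/\mc{D} \cong \mc{D}^2$ in \eqref{naked}, and one must verify that the bracket condition $[\delta_{ij}, \mc{D}] \in \mc{D}$ is the precise first-order shadow of $\phi_{ij}^* \mc{D} = \mc{D}$. A secondary subtlety is that the base $D$ has a nilpotent \emph{odd} direction $\eta$ with the nonstandard relation $\eta t = 0$; I would be careful that the odd part of the cocycle genuinely pairs with $\eta$ and that the relation $\eta t = 0$ is automatically respected because each $\delta_{ij}$ is valued in the square-zero ideal, so no higher-order cross terms arise. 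Once the grading conventions are pinned down, the argument is the usual equivalence between square-zero deformations and $H^1$ of the relevant automorphism sheaf, here $\mc{A}_X$.
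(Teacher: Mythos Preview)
Your proposal is correct and follows essentially the same approach as the paper: local triviality of the deformation, transition automorphisms on overlaps linearized to superconformal derivations, and the identification $\on{Der}^{scf}(\ox, I) \cong \mc{A}_X$ via $I \cong \ox \oplus \Pi \ox$ and Lemma~\ref{technical}. The paper phrases the conclusion as ``torsor for $H^1(X,\mc{A}_X)$ with the trivial deformation sent to $0$,'' whereas you spell out the \v{C}ech cocycle and coboundary conditions explicitly, but the argument is the same.
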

\begin{proof}
The diagram \eqref{deformSRS} induces the following morphism  of short exact sequences  on structure sheaves, 
\begin{equation}
    \begin{tikzcd}
        0 \arrow[r]  & I \arrow[r, "{\cdot}t{,}{\cdot}\eta"]  & \mc{O}_{X'} \arrow[r] & \ox \arrow[r] &  0 \\ 
        0 \arrow[r]  & J=(t, \eta) \arrow[r] \arrow[u]  & D \arrow[r] \arrow[u,"f"] & \bb \arrow[r] \arrow[u, "f_0"] &  0
    \end{tikzcd}
\end{equation}
where $I:=\on{ker}(\oxprime \to \ox)$, $J:=\on{ker}(D \to \bb)$.
Any  first order deformation of is locally on $X$ isomorphic to the trivial deformation. Let $U_i$ be a trivializing cover for $X$, and let $\phi_i: X' \vert_i \lgr U_i \times \Spec D$ be the respective isomorphism. Then $\Phi_{ij}:=\phi_{ij}/\phi_{ji}$ is an automorphism of the trivial deformation $U_{ij} \times \Spec D$. Giving an automorphism of the trivial deformation is equivalent to giving an  even derivation $\delta: \mc{O}_{ij'} \to i_*I$ preserving the superconformal structure on $X$, so $\delta $ is a section of $\on{Der}^{scf}(i^*\oxprime, I)(U_{ij}) = \on{Der}^{scf}(\ox, I)(U_{ij})$.  Here $i$ denotes the top horizontal arrow $X \to X'$ in  \eqref{deformSRS}. Furthermore, since $I \cong f_0^*(J)= \ox \oplus \Pi \ox$, we have an isomorphism \[ \on{Der}^{scf}(\ox, I)  \cong \mc{A}_X, \] 
by Lemma \ref{technical},  and so $\delta \in \mc{A}_X(U_{ij})$.
The set of isomorphism classes of first order deformations of $X$ is therefore a torsor for $H^1(X, \mc{A}_X)$, and there is a canonical isomorphism from the set of isomorphism classes of first order deformations of $X$ to $H^1(X, \mc{A}_X)$ sending the trivial deformation of $X$ to $0 \in H^1(X, \mc{A}_X)$. 
\end{proof}

\subsubsection{Super Riemann surfaces with Neveu-Schwarz (NS) punctures } A \emph{Neveu-Schwarz (NS) puncture} on a super Riemann surface $X$ is just a choice of marked point, $s: T \to X$.  A super Riemann surface with $n_S$ NS punctures is the data $(\pi: X \to T, \mc{D}, s_1,\dots, s_n: T \to X)$.  An isomorphism of super Riemann surfaces with NS punctures is an isomorphism $\phi: X \to X'$ of super Riemann surfaces such that $\phi^{-1}(\mc{P}_{NS}')= \mc{P}_{NS}$. 
\newline

\noindent \emph{Duality.} The duality between points and divisors on super Riemann surfaces identifies each NS puncture with a prime divisor $\mc{P}_{NS} \subset X$ of degree $n_S$. 
\newline 

\noindent \emph{Superconformal vector fields.}  The sheaf of superconformal derivations on a super Riemann surface with NS punctures is the subsheaf of $\mc{A}_X$ whose sections vanish along $\nspunc$. Specifically, if $X=(S, \mc{D})$ is a super Riemann surface and $\nspunc$ is the NS divisor, then the sheaf of superconformal vector fields is the subsheaf $\mc{A}_X(-\mc{P}_{NS}) \subset \mc{A}_X$, where
\begin{equation} \label{NSder} \mc{A}_X(-\mc{P}_{NS})= \derscf(\ox, \ox) \otimes \mc{O}(-\mc{P}_{NS}) \cong \derscf(\ox, \mc{O}(-\mc{P}_{NS})) \end{equation} and 
\begin{equation} \label{confNS} \mc{A}_X(-\nspunc) \cong (T_X/\mc{D})(-\nspunc) \cong \mc{D}^2(-\nspunc).\end{equation}
\newline 

\noindent \emph{First order deformations.} A first order deformation of a super Riemann surface $X=(S, \mc{D})$ with NS punctures $\mc{P}_{NS}$ is a first order deformation $X'$ of $X$ and divisor $\mc{P}_{NS}'$ whose pull back to $X$ is equal to $\nspunc$. To prove the next lemma, replace  $\mc{A}_X$ in the proof of Lemma \ref{deformSRS} with $\mc{A}_X(-\nspunc)$. 

\begin{lemma} \label{deformSRSns} The set of isomorphism classes of first order deformations of $X=(S,\mc{D}, \nspunc)$ is equal to $H^1(X,\mc{A}_X(-\nspunc))$.
    
\end{lemma}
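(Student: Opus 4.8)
The plan is to mirror the proof of Lemma \ref{firstorderSRS} verbatim, replacing the sheaf $\mc{A}_X$ with $\mc{A}_X(-\nspunc)$ at every step, and to track the additional datum of the NS divisor $\nspunc'$ through the same \v{C}ech argument. First I would set up the same commutative diagram of structure sheaves induced by the square-zero extension $\Spec \bb \to \Spec D$, with $I = \on{ker}(\oxprime \to \ox) \cong \ox \oplus \Pi \ox$. The key structural input is that a first order deformation of $(S, \mc{D}, \nspunc)$ is, by definition, a deformation $X'$ of $(S,\mc{D})$ together with a divisor $\nspunc'$ on $X'$ restricting to $\nspunc$; so over each trivializing chart the deformation is \emph{locally trivial as a punctured super Riemann surface}, meaning we may choose local isomorphisms $\phi_i$ carrying both the superconformal structure and the divisor $\nspunc$ to their trivial models simultaneously.

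The heart of the argument is identifying the gluing cochain. On overlaps $U_{ij}$ the transition $\Phi_{ij} = \phi_{ij}/\phi_{ji}$ is an automorphism of the trivial deformation which must (i) preserve $\mc{D}$ and (ii) fix the divisor $\nspunc$. As in Lemma \ref{firstorderSRS}, such an automorphism corresponds to an even superconformal derivation $\delta_{ij} \in \on{Der}^{scf}(\ox, I)(U_{ij})$, which by Lemma \ref{technical} lands in $\mc{A}_X(U_{ij})$. The new constraint (ii) — that $\Phi_{ij}$ fixes the ideal sheaf of $\nspunc$ — is exactly the condition that $\delta_{ij}$ be a section whose associated vector field vanishes along $\nspunc$, i.e.\ $\delta_{ij} \in \mc{A}_X(-\nspunc)(U_{ij})$. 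This is where I would invoke the isomorphism \eqref{NSder}, $\mc{A}_X(-\nspunc) \cong \derscf(\ox, \mc{O}(-\nspunc))$, to make precise that ``superconformal derivation preserving $\nspunc$'' is literally a section of the twisted sheaf. Once the cochain $(\delta_{ij})$ is seen to be a \v{C}ech $1$-cocycle valued in $\mc{A}_X(-\nspunc)$, the cocycle and coboundary conditions follow from compatibility of the $\phi_i$ on triple overlaps and from changing the local trivializations, respectively, verbatim as before.

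Finally I would conclude, exactly as in Lemma \ref{firstorderSRS}, that the set of isomorphism classes of first order deformations of $(S,\mc{D},\nspunc)$ is a torsor under $H^1(X, \mc{A}_X(-\nspunc))$, with the trivial deformation mapping to $0$, giving the claimed canonical bijection. The main obstacle — and the only place where genuine content beyond Lemma \ref{firstorderSRS} enters — is the verification that requiring $\Phi_{ij}$ to fix the divisor $\nspunc$ corresponds precisely to the vanishing condition cutting out $\mc{A}_X(-\nspunc) \subset \mc{A}_X$, rather than some larger or smaller subsheaf. I expect this to come down to a local coordinate computation using the prime divisor description $z - z_0 - \theta\theta_0 = 0$ from the duality discussion: an even superconformal vector field preserves this ideal iff it vanishes at the marked point, which is exactly the local generating condition for $\mc{A}_X(-\nspunc)$. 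Since the excerpt instructs to prove the lemma by the stated substitution, I would keep this verification brief but flag it as the substantive step.
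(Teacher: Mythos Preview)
Your proposal is correct and is exactly the approach the paper takes: the paper's entire proof is the one-line instruction ``replace $\mc{A}_X$ in the proof of Lemma \ref{firstorderSRS} with $\mc{A}_X(-\nspunc)$,'' and your write-up is simply a careful unpacking of that substitution, including the one substantive point (that preserving $\nspunc'$ forces the derivation into $\mc{A}_X(-\nspunc)$ via \eqref{NSder}) which the paper leaves implicit.
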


\subsubsection{Super Riemann surfaces with Ramond punctures.} Let $\pi: S \to T$ be a family of supercurves.  A \emph{Ramond divisor} on $S$ is prime divisor $\mc{P}_R \subset S$ for which there exists a rank $0|1$ distribution $\mc{D} \subset \mc{T}_{S/T}$ fitting into the short exact sequence
\[ 0 \to \mc{D} \to \mc{T}_{S/T} \to \mc{D}^2(\mc{P}_R) \to 0. \]
The prime components $\mc{P}_i \subset \mc{P}_R$ are called \emph{Ramond punctures}. A super Riemann surface with $n_R$ Ramond punctures is the data $(\pi: S \to T, \mc{P}_R, \mc{D})$ where $\mc{P}_R$ is a Ramond divisor of degree $n_R$. The number of Ramond punctures is always even, and so we set $n_R=2r$. 

We can always find local coordinates $(z, \theta)$ on $S$ such that $\mc{P}_R$ is locally defined by the equation $z=0$ and
 $\mc{D}$ is locally generated by the odd vector field, 
 \begin{equation} \label{localformramond} D = \frac{\partial}{\partial \theta} + z \theta \frac{\partial}{\partial z}. \end{equation}
 \newline 

\noindent \emph{Duality.}  The duality between points and divisors we saw on super Riemann surfaces, and super Riemann surfaces with NS punctures,  fails in the presence of Ramond punctures . Consequently, there is no canonical way to associate a set of marked points to the Ramond divisor $\rpunc$. This is in stark contrast to what we saw with NS punctures, which start out as marked points, but to which we can canonically associate a divisor $\nspunc$ using duality. Let us explain this in more detail.  First, note that $\mc{D}$ is non-integrable away from $\rpunc$ but \emph{integrable} along $\rpunc$ since \[ [ , \ , \ ]: \mc{D}^2 \lgr (\mc{T}_{S/T}/\mc{D})(-\rpunc)=0 \in \mc{D}, \]
 where $[ \ , \ ]$ is the supercommutator.  Recall from ordinary geometry that an integrable distribution of rank $k$ determines a submanifold of dimension $k$ which is tangent to the distribution at every point. Since $\mc{D}$ is integrable along $\rpunc$, it determines a submanifold of $\rpunc$ of dimension $0|1$. Since $\rpunc$ is a divisor, and thus of dimension $0|1$, this submanifold must be equal to $\rpunc$, and so that $\rpunc$ is  tangent to $\mc{D}$ at every point. In particular, there does \emph{not} exist a unique point on $\rpunc$ which we can use to define a duality.  In this sense,  \emph{Ramond punctures are not marked points}! 
\newline

\noindent \emph{Superconformal vector fields.} The  subsheaf  $\underline{\on{Der}}_k^{scf}(\mc{O}_X,\ox) \subset \underline{\on{Der}}_k(\mc{O}_X,\ox)=\mc{T}_X$ of derivations preserving the superconformal structure $\mc{D}$  \footnote{Any derivation that preserves $\mc{D}$ also preserves the Ramond divisor, $\rpunc$, since $\rpunc$  is part of the structure of $\mc{D}$.} on a super Riemann surface with Ramond punctures is defined by the following condition: Set $\mc{A}_X:=\underline{\on{Der}}_k^{scf}(\mc{O}_X,\ox)$. A derivation $V \in \mc{T}_X$ is a section of $\mc{A}_X$ if and only if 
$ [V, \mc{D}] \in \mc{D}$ if and only if $V \in (\mc{T}_X/\mc{D}))(-\rpunc)$, where the last condition follows from the  maximal non-integrability condition on $\mc{D}$. Thus, \begin{equation} \label{nakRam} \mc{A}_X \cong \mc{T}_X/\mc{D}(-\rpunc) \cong \mc{D}^2.  \end{equation}   
If $I$ is an $\ox$-module, then \begin{equation} \label{longlist} \derscf(\ox,I) \cong \mc{A}_X \otimes I, \end{equation} where
\[ (\mc{A}_X \otimes I)^+= \on{Der}^{scf}(\mc{O}_X,I), \ \text{and} \  (\mc{A}_X \otimes I)^-=\on{Der}^{scf}(\mc{O}_X,\Pi I).\] 
\newline

\noindent \emph{First order deformations. } A first order deformation of  $X=(S,\mc{P}_{R}, \mc{D})$ is a super Riemann surface with Ramond punctures $X'=(f: S' \to \Spec D, \mc{P}_R', \mc{D}')$ over $\Spec D$ making the diagram in \eqref{deformSRS} commute, and such that the restriction of $\rpunc'$ to $X$ is equal to $\rpunc$. 
\begin{lemma} \label{deformRamond}
    The set of isomorphism classes of first order deformations of $X=(S,\mc{P}_{R}, \mc{D})$ is equal to $H^1(X, \mc{A}_X)$.
\end{lemma}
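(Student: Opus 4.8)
The plan is to run the same argument as in the proof of Lemma \ref{firstorderSRS}, adapting only the identification of the sheaf of infinitesimal automorphisms so that it accounts for the Ramond structure. First I would record the morphism of short exact sequences of structure sheaves induced by the defining diagram \eqref{deformSRS}, with $I = \on{ker}(\oxprime \to \ox)$ and $J = (t,\eta)$, so that $I \cong f_0^*(J) = \ox \oplus \Pi\ox$. Since every first order deformation is locally isomorphic to the trivial one, I would then choose a trivializing cover $U_i$ of $X$ with isomorphisms $\phi_i \colon X'\vert_i \lgr U_i \times \Spec D$ and form the transition automorphisms $\Phi_{ij} = \phi_{ij}/\phi_{ji}$ of the trivial deformation over $U_{ij}$.

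The core of the lemma is to identify these $\Phi_{ij}$ with sections of the Ramond superconformal sheaf $\mc{A}_X$ of \eqref{nakRam}. An automorphism of the trivial deformation restricting to the identity on $X$ is the same as an even derivation $\delta \colon \ox \to i_*I$, and since the deformation is a deformation of $X$ together with its superconformal structure, $\delta$ must preserve $\mc{D}$, hence $\delta$ is a section of $\derscf(\ox, I)$. Applying \eqref{longlist} with $I \cong \ox \oplus \Pi\ox$ gives $\mc{A}_X \otimes I \cong \mc{A}_X \oplus \Pi\mc{A}_X$, whose even part is $\mc{A}_X^+ \oplus \mc{A}_X^- = \mc{A}_X$; thus $\derscf(\ox, I) \cong \mc{A}_X$, exactly as in the Ramond-free computation of Lemma \ref{technical}, but now with the twisted sheaf \eqref{nakRam}. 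Therefore $\delta \in \mc{A}_X(U_{ij})$, the collection $\{\Phi_{ij}\}$ assembles into a \v{C}ech $1$-cocycle, changing the trivializations $\phi_i$ alters it by a coboundary, and the isomorphism classes of deformations form a torsor under $H^1(X, \mc{A}_X)$ with the trivial deformation sent to $0$.

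The one genuinely new point — and the step I expect to require the most care — is checking that the constraint on the Ramond divisor, namely that $\rpunc'$ restrict to $\rpunc$, imposes no condition beyond preserving $\mc{D}$. This is precisely where the Ramond case diverges from the NS case of Lemma \ref{deformSRSns}: there the puncture divisor is independent data and one is forced to twist down to $\mc{A}_X(-\nspunc)$, whereas here $\rpunc$ is recovered intrinsically from $\mc{D}$ as the degeneracy locus of the non-integrability pairing $\mc{D}^2 \to \mc{T}_{S/T}/\mc{D}$. Concretely, I would invoke the observation that any superconformal automorphism automatically preserves $\rpunc$, so that deforming the pair $(\mc{D}', \rpunc')$ compatibly is the same as deforming $\mc{D}'$ alone; the required twist by $(-\rpunc)$ is already built into $\mc{A}_X$ via \eqref{nakRam}, and no further vanishing need be imposed. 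Once this is settled, the cocycle-and-coboundary bookkeeping is identical to that of Lemma \ref{firstorderSRS}, and the canonical isomorphism with $H^1(X, \mc{A}_X)$ follows.
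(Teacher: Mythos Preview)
Your proposal is correct and matches the paper's approach: the paper states Lemma~\ref{deformRamond} without a separate proof, implicitly relying on the argument of Lemma~\ref{firstorderSRS} with $\mc{A}_X$ now given by \eqref{nakRam}, which is exactly what you do. Your observation that preserving $\mc{D}$ automatically preserves $\rpunc$ is precisely the content of the paper's footnote to the Ramond superconformal vector field discussion, so the one ``new'' step you flag is already handled in the text.
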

\section{Branched Covers} \label{branched} 

In this section, we define branched covers of supercurves and super Riemann surfaces. 

\subsection{Branched covers of supercurves}

Let $S$ be a supercurve,  let $\mc{B} \subset S$ be a closed subscheme of codimension $(1|0)$, and let $U=S -\mc{B}$.  A \emph{branched cover} of $S$ with branch locus $\mc{B}$ is a  finite, surjective morphism $\pi: \tils \to S$ such that the local morphisms $\pi_p^{\#}: \mc{O}_{S,f(p)} \to \mc{O}_{\tils, p}$ are \'etale for all $p \in U$.  The cover $\pi$ may be ramified along a closed subscheme $\mc{R} \subset \pi^{-1}(\mc{B}) \subset \tils$ of codimension $1|0$ called the ramification divisor of $\pi$. 
The bosonic reduction $\pi_{bos}: \tilc \to C$ of $\pi: \tils \to S$ is ramified along a divisor $R=\mc{R}_{bos}$ with branch locus $B=\mc{B}_{bos}$.

An isomorphism of branched covers is a pair of isomorphisms of supercurves $\phi: S \lgr S'$ and $\tilde{\phi}: \tils \to \tils'$ such that the following diagram commutes, \begin{equation} \label{isomorphism} 
\begin{tikzcd}
     \tils \arrow[d, "\pi"] \arrow[r, " \tilde{\phi}"] & \tils' \arrow[d, "\pi'"]            \\
S \arrow[r, "\phi"] & S'. 
\end{tikzcd}
\end{equation} 
and such that $\phi(\mc{B})=\mc{B}'$ and $\tilde{\phi}(\mc{R}) = \mc{R}'$.
\newline

\noindent \emph{Local description.} Let $\pi: \tils \to S$ be a branched cover,  and let $\mc{P} \subset \mc{R}$ be a connected component of the ramification divisor of $\pi$. Near $\mc{P}$, we can find (local) coordinates $(w, \theta')$ and $(z, \theta) $ on  $\tils$ and $S$, respectively, such that $\mc{B} \subset S$  is defined by the equation $z=0$ and  $\pi$ is locally of the form
\begin{equation} \label{localdesc} z=w^k, \ \ \theta=w^{\ell} \theta' \end{equation}
for some $k > 0, \ell \ge 0$.
\newline

\noindent \emph{Branched covers over bosonic  schemes.}  Let $\pi: \tils \to S$ be a branched cover over a purely bosonic scheme. Then $S$ and $\tils$ are split, and there exist line bundles $L$ and $\tilde{L}$ on $C=S_{bos}$ and $\tilc=\tils_{bos}$, respectively, such that 
\[ \mc{O}_S= \mc{O}_C \oplus \Pi L, \ \ \mc{O}_{\tils}=\mc{O}_{\tilc} \oplus \Pi \tilde{L}, \]
and the even and odd components of $\pi^{\#}$, \[ \pi_0^{\#} = \pi_{bos}: \oc \to (\pi_0)_*\oct, \ \ \Pi \pi_1^{\#} : L \to (\pi_0)_*\tilde{L} \] are, respectively, a branched cover of the curve $C$, and a morphism of line bundles on $C$. 
From this description, we see that, in the split case, giving a branched cover of $S$ is equivalent to giving a branched cover  $\pi_0: \tilc \to C$ of $C$, a line bundle $\tilde{L}$ on $\tilc$, and a morphism $\Pi \pi_1^{\perp}: \pi_0^*L \to \tilde{L}$ of line bundles on $\tilc$. Note that the adjunction map sends $\Pi \pi_1^{\perp}$ to a morphism $\Pi \pi_1^{\#}:  L \to (\pi_0)_* \tilde{L}$ of $\oc$-modules. 
\newline 

\noindent \emph{Semi-\'etale cover.} We continue to work over a purely bosonic scheme.  Using the above notation, we say that the branched cover $\pi: \tils \to S$ determined by $(\pi_0: \tilc \to C, \tilde{L}, \pi_1^{\perp})$ is \emph{semi-\'etale} if $\pi_1^{\perp}:\pi_0^*L \lgr \tilde{L} $ is an isomorphism. If $\pi$ is semi-\'etale then $\ell=0$ in \eqref{localformramond}.
\newline

\begin{lemma} \label{semi}
 Let $S \to T$ be a family of supercurves over a purely bosonic scheme $T$, and fix a prime divisor $\mc{B} \subset S$. There is a one-to-one correspondence between branched covers of $C=S_{bos}$ with branch locus $B=\mc{B}_{bos}$ and semi-\'etale branched covers of $S$ with branch locus $\mc{B}$.  
 
\end{lemma}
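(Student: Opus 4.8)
The plan is to reduce everything to the explicit classification of branched covers over a purely bosonic base recalled just before the statement. Since $T$ is purely bosonic, both $S$ and any branched cover $\tilde{S}$ are split, say $\mc{O}_S = \oc \oplus \Pi L$ and $\mc{O}_{\tilde{S}} = \oct \oplus \Pi \tilde{L}$, and giving a branched cover of $S$ is the same as giving a triple $(\pi_0 : \tilc \to C,\ \tilde{L},\ \pi_1^{\perp} : \pi_0^* L \to \tilde{L})$, the cover being semi-\'etale precisely when $\pi_1^{\perp}$ is an isomorphism. I would exhibit mutually inverse assignments between branched covers of $C$ with branch locus $B$ and semi-\'etale branched covers of $S$ with branch locus $\mc{B}$. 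In the forward direction, given $\pi_0 : \tilc \to C$ with branch locus $B$, I set $\tilde{L} := \pi_0^* L$ and take $\pi_1^{\perp} := \on{id}_{\pi_0^* L}$; since this is an isomorphism, the resulting split cover $\pi : \tilde{S} \to S$ is semi-\'etale. I would then check that its branch locus is exactly $\mc{B}$: semi-\'etaleness forces $\ell = 0$ in the local normal form \eqref{localdesc}, so $\pi$ is locally $z = w^k,\ \theta = \theta'$, and its branch locus is therefore the unique extension to $S$ of the branch locus $B$ of $\pi_0$, which is $\mc{B}$ by Corollary \ref{evensame}.

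In the backward direction, a semi-\'etale cover $\pi : \tilde{S} \to S$ with branch locus $\mc{B}$ is sent to its bosonic reduction $\pi_{bos} = \pi_0 : \tilc \to C$, a branched cover of $C$ with branch locus $B = \mc{B}_{bos}$. The composite backward-then-forward is the identity on the nose, since the bosonic reduction of $(\pi_0,\ \pi_0^* L,\ \on{id})$ is $\pi_0$. For forward-then-backward I start from an arbitrary semi-\'etale triple $(\pi_0,\ \tilde{L},\ \pi_1^{\perp})$ and must identify it with $(\pi_0,\ \pi_0^* L,\ \on{id})$; here the semi-\'etale hypothesis is exactly what is needed, because the isomorphism $\pi_1^{\perp} : \pi_0^* L \lgr \tilde{L}$ furnishes precisely the isomorphism of line bundles making the two triples isomorphic as branched covers of $S$ in the sense of \eqref{isomorphism}.

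The main obstacle is bookkeeping rather than a deep point: one must verify that the two assignments are compatible with isomorphisms in both directions, so that they descend to a genuine bijection on isomorphism classes (indeed an equivalence of the two groupoids of covers). Working with isomorphisms covering the identity on the base, an isomorphism $\tilde{\phi}_0 : \tilc \to \tilc'$ over $C$ canonically identifies $\pi_0^* L$ with $\tilde{\phi}_0^*\,(\pi_0')^* L$, because $\pi_0' \circ \tilde{\phi}_0 = \pi_0$; since both comparison maps $\pi_1^{\perp}$ are the identity on the semi-\'etale covers $(\pi_0,\ \pi_0^* L,\ \on{id})$ and $(\pi_0',\ (\pi_0')^* L,\ \on{id})$, this identification is automatically compatible with the odd data and upgrades $\tilde{\phi}_0$ to an isomorphism over $S$. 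Conversely, any isomorphism of the semi-\'etale covers restricts on bosonic reductions to an isomorphism of the $\pi_0$'s, and the case of isomorphisms covering a nontrivial automorphism of the base follows identically using functoriality of bosonic reduction and of pullback. This yields the desired one-to-one correspondence.
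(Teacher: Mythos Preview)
Your proof is correct and essentially coincides with the paper's. The only cosmetic difference is that the paper packages the forward assignment as the fiber product $\pi: S \times_C \tilc \to S$ and then computes $\mc{O}_{S \times_C \tilc} = \mc{O}_{\tilc} \oplus \Pi \pi_0^* L$ to identify it with the triple $(\pi_0,\ \pi_0^* L,\ \on{id})$, whereas you write down that triple directly; your treatment of the inverse direction and of compatibility with isomorphisms is more explicit than the paper's, but the underlying argument is the same.
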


\begin{proof}  
 Let $\pi_0: \tilc \to C$ be a branched cover of $C=S_{bos}$ and let $B \subset C$ and $R \subset \tilc$ denote its branch and ramification divisor. First, note that since $S$ is a supercurve over a bosonic scheme, $B$ extends uniquely to a divisor, $\mc{B}$, on $S$, by  Lemma \ref{evensame}. To prove the lemma, we will show that 
 the morphism $\pi: S \times_C \tilc \to S$ in the cartesian diagram  
\begin{equation}\label{di}
    \begin{tikzcd}
   S \times_{C} \tilc \arrow[r, "\pi"] \arrow[d] & S \arrow[d] \\
    \tilc \arrow[r, "\pi_0"] & C
\end{tikzcd} 
\end{equation} 
is the unique extension, up to isomorphism,  of $\pi_0$ to a semi-\'etale  cover of $S$ branched along $\mc{B}$. It suffices to show that $\pi$ is the branched cover of $S$ determined by the data $(\pi_0, (\pi_0)^*L,\on{id}_{\pi_0^*L})$. That this is the case, follows from 
\[ \mc{O}_{S \times_C \tilc}= \mc{O}_{\tilc} \oplus \Pi \pi_0^* L. \] 



\end{proof}

 We define the \emph{local degree} of $\pi$ at a connected component of the ramification divisor $\mc{P} \subset \mc{R}$ to be the local degree of $\pi_{bos}$ at $P \subset R$, i.e., the number of sheets of $\pi_{bos}$ that come together at the ramification point $P$.
These numbers are arranged into a sequence $\rho=(d_1, \dots, d_s)$ where $\sum d_i =d$, assuming the ramification points are ordered.
According to Theorem 4 in \cite{husemoller1962ramified} a degree $d$ branched cover $\pi: \tilc \to C$ of ordinary curves, with ramification configuration $\rho$, exists if and only if $g(C) \ge 2$ and the sum of the local degrees $d_i$ minus $1$ is congruent to $0$ modulo $2$, i.e., \[ \sum d_i - 1 \equiv 0 \ (\text{mod} \  2). \] 
From Lemma \ref{semi} we know that, at least in the split case,  every branched cover of $C=S_{bos}$ lifts uniquely to a branched cover of $S$. In particular, the cited existence result can also be used for supercurves.




\subsubsection{Branched covers of super Riemann surfaces}
Let $X=(S, \mc{D})$ be a super Riemann surface. We define a \emph{branched cover of $X$} to be a branched cover $\pi: \tils \to S$ of the supercurve $S$  such that $\pi^*(\mc{D})$ is a superconformal structure on $\tils$, possibly with Ramond punctures $\mc{P}_R$. 
An isomorphism of branched covers of super Riemann surfaces is an isomorphism of branched covers of supercurves with the additional condition that the isomorphisms respect  all superconformal structures. 
\newline

\noindent \emph{Local description.} Let $\pi: \tilx \to X$ be a branched cover,  and let $\mc{P} \subset \mc{R}$ be a connected component of the ramification divisor of $\pi$ of \emph{even} local degree.  Then $\mc{P}$ is a Ramond puncture on $\tilx$, and near $\mc{P}$, we can find (local) coordinates $(w, \theta')$ and $(z, \theta) $ on  $\tilx$ and $X$, respectively, such that $\mc{B} \subset X$  is defined by the equation $z=0$ and  $\pi$ is locally of the form
\begin{equation} \label{localdescRSR} z=w^{2\ell}, \ \ \theta=w^{\ell} \theta' \end{equation}
for some $\ell \ge 1$. If $\mc{P}$ is a connected component of odd local degree, then it is a NS puncture on $\tilx$, and $\pi$ is locally of the form 
\begin{equation} \label{localdescRSNS} z=w^{2\ell+1}, \ \ \theta=w^{\ell} \theta' \end{equation}
for some $\ell \ge 1$.
\newline

\noindent \textbf{Blow up construction.} Given a super Riemann surface $X =(S, \mc{D})$, and a branched cover $\pi: \tils \to S$, locally of the form $z=w^{\ell}, \theta=\theta'$, one may be tempted to think  $\pi: \tilx \to X$, where $\tilx=(\tils, \pull(\mc{D}))$, is a branched cover of $X$.  However, this is \emph{never} the case (unless $\pi$ is unramified).   The problem is that $\pull(\mc{D})$ will always have  parabolic degenerations along the ramification locus of $\pi$. 
In \cite{donagi2015supermoduli}, it is explained how these parabolic degenerations can be resolved in a way that recovers a superconformal structure on a certain supercurve canonically associated to $\tils$. The idea is to blow up $\tils$ along the distinguished closed points on each ramification divisor. The result is a supercurve $\on{Bl}(\tils)$ with the same bosonic reduction $\tilc$ as $\tils$, and with a natural projection $q:\on{Bl}(\tils) \to \tils$ such that $q^*(\pull \mc{D})$ is a superconformal structure on $\on{Bl}(\tils)$ with Ramond punctures along the exceptional divisors on $\on{Bl}(\tils)$ corresponding to the pullbacks by $q$ of 
ramification points of even local degree.

We recall that the blow up of an ordinary (smooth) curve at a point on that curve is just another copy of the curve. In particular, the bosonic reduction of $\on{Bl}(\tils)$ is the ordinary curve $\tilc$ and the composition $q \circ \pi$ reduces to $\pi_{bos}: \tilc \to C$.
\newline

 Lemma \ref{semi} has the following corollary:
\newline 

\begin{corollary} \label{bosmodriemann} Let $X=(S, \mc{D})$ be a family of super Riemann surfaces over a bosonic scheme $T$, and fix a prime divisor $\mc{B} \subset S$. Then there is a one-to-one correspondence between branched covers of $C=S_{bos}$ with branch locus $B=\mc{B}_{bos}$ and branched covers of $X$ with branch locus $\mc{B}$. 
    
\end{corollary}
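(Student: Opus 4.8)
The plan is to chain together two bijections: the correspondence from Lemma~\ref{semi} between branched covers of $C$ and semi-\'etale branched covers of $S$, and a second correspondence between semi-\'etale branched covers of $S$ and branched covers of the super Riemann surface $X=(S,\mc{D})$ supplied by the blow-up construction recalled above. Composing the two yields the asserted bijection between branched covers of $C$ with branch locus $B=\mc{B}_{\bos}$ and branched covers of $X$ with branch locus $\mc{B}$.

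For the forward direction I would start from a branched cover $\pi_0\colon \tilc\to C$ with branch locus $B$. Lemma~\ref{semi} produces a unique semi-\'etale branched cover $\pi\colon \tils\to S$ extending it, locally of the form $z=w^k$, $\theta=\theta'$. The pulled-back distribution $\pull\mc{D}$ is not superconformal---it acquires parabolic degenerations along the ramification divisor---but by the construction of \cite{donagi2015supermoduli} there is a canonical blow-up $q\colon \on{Bl}(\tils)\to \tils$, with the same bosonic reduction $\tilc$, on which $q^*\pull\mc{D}$ is a genuine superconformal structure carrying Ramond punctures along the exceptional divisors over the even-degree ramification points. The composite $\pi\circ q\colon \on{Bl}(\tils)\to S$ is then a branched cover of $X$ in the sense of the definition, with local form exactly \eqref{localdescRSR}--\eqref{localdescRSNS}. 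This assigns to each branched cover of $C$ a branched cover of $X$.

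The inverse assignment sends a branched cover of $X$ to its bosonic reduction, which is an ordinary branched cover of $C$ with branch locus $B$. The two assignments agree on bosonic data: since blowing up a smooth curve at a point returns the same curve, $(\on{Bl}(\tils))_{\bos}=\tilc$ and $(\pi\circ q)_{\bos}=\pi_{\bos}=\pi_0$, so applying the forward map to $\pi_0$ and then taking the bosonic reduction recovers $\pi_0$. This settles one of the two round trips immediately.

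The crux is the other round trip: I must show that an arbitrary branched cover $\varpi$ of $X$ is recovered, up to isomorphism, by applying Lemma~\ref{semi} and then the blow-up to $\varpi_{\bos}$. This is the step where the superconformal structure on the cover must be pinned down by its bosonic data. I would argue locally using the normal forms \eqref{localdescRSR} and \eqref{localdescRSNS}: a branched cover of $X$ of local degree $k$ is forced into the shape $z=w^k$, $\theta=w^{\lfloor k/2\rfloor}\theta'$, which is precisely the blow-up of the semi-\'etale model $z=w^k$, $\theta=\theta'$ resolved so that $q^*\pull\mc{D}$ becomes superconformal. The uniqueness clauses in Lemma~\ref{semi} and in the blow-up resolution should then upgrade this local match to a global isomorphism of branched covers of $X$. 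The main obstacle is establishing exactly this rigidity---that over the fixed bosonic base the Ramond superconformal structure admits no variation away from the blown-up model---after which the two constructions are mutually inverse and the correspondence follows.
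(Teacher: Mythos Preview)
Your approach is essentially the paper's: both arguments reduce the claim to Lemma~\ref{semi} by identifying branched covers of the super Riemann surface $X$ with semi-\'etale covers of the underlying supercurve $S$, and the link between the two is precisely the blow-up construction you invoke. The paper compresses that link into a one-line local-coordinate check---writing $\theta'=\theta/w^{\ell}$ and verifying that the generator $dz-\theta\,d\theta$ of $\mc{D}^{-2}$ pulls back to a generator of $\tilsup^{-2}(-\rpunc)$---which is exactly your observation that the local normal forms \eqref{localdescRSR}--\eqref{localdescRSNS} for a cover of $X$ arise uniquely from blowing up the semi-\'etale model $z=w^{k}$, $\theta=\theta'$. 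Your write-up is more explicit about separating the two bijections and about naming the rigidity step (no freedom in the Ramond superconformal structure over a bosonic base once the local form is fixed), but the substance is the same.
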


\begin{proof} A  branched cover $\pi: \tilx=(\tils, \rpunc, \tilsup) \to S=(X, \mc{D})$  is   semi-\'etale because $(w,\theta':=\theta/w^l)$ on a  supercurve, and one can check in local coordinates that the generator $dz-\theta d\theta$ of $\mc{D}^{-2}$ on $X$ is mapped to $dw - w d \theta' d\theta'$ generating $\tilsup^{-2}(-\rpunc)$. Now apply Lemma \ref{semi}.  

\end{proof}

\section{Moduli of branched covers} \label{modbranch}

Let $\compbranch: \on{sSch} \to \on{Groupoid}$ denote the functor sending a superscheme $T$ to the groupoid \[ \compbranch(T)=  \begin{cases} \text{ \textbf{Objects}: Degree} \  d \ \text{branched covers} \  \pi: \tilx \to X  \ 
\text{of all} \   X \in \frak{M}_{2,1}(T), \\  \ \text{with specified ramification configuration} \  \rho  \  \text{and branched along the one NS puncture on} \  X.  \\ 
\text{ \textbf{Morphisms}: Isomorphisms of branched covers of super Riemann surfaces.} 
\end{cases} 
 \]  

The following examples of  moduli functors may be familiar:
\newline 

\noindent \textbf{Moduli of Curves.}  $M_{g,n}$ sends an ordinary scheme $T$ to the groupoid of families of genus $g$ curves with $n$ marked points over $T$.  
\newline 

\noindent \textbf{Moduli of Spin Curves.}  $SM_{g,n_S, n_R}$ sends an ordinary scheme $T$ to the groupoid of families genus $g$ spin curves with $n_S$ marked points (NS punctures) and $n_R$ Ramond punctures over $T$
\newline

\noindent \textbf{Supermoduli space.}  $\frak{M}_{g,n_S, n_R}$ sends a superscheme $T$ to the groupoid of families of genus $g$ super Riemann surfaces with $n_S$ marked points (NS punctures) and $n_R$ Ramond punctures over $T$. 
\newline 

The bosonic reduction of any functor $F: \on{sSch} \to \on{Groupoid}$ is its restriction to the subcategory $\on{Sch} \subset \on{sSch}$ of ordinary schemes, i.e., $F_{bos}: \on{Sch} \to \on{Groupoid}$. 
\newline

The bosonic reduction of $\frak{M}_{g,n_S, n_R}$ is $SM_{g,n_S,n_R}$. This follows immediately from the one-to-one correspondence between families of super Riemann surfaces over ordinary schemes and  families of spin curves over the same ordinary scheme. To work with the  bosonic reduction of $\compbranch$, 
we use the following cartesian diagram, which is the
 moduli version of Corollary \ref{bosmodriemann},
 \begin{equation} \label{finite}
    \begin{tikzcd} (\compbranch)_{bos} \arrow[r, "p_{bos}"] \arrow[d] & SM_{2,1} \arrow[d, ]\\
    \tilde{M}_{d,\rho} \arrow[r,] & M_{2,1}, 
    \end{tikzcd}
\end{equation}
where $SM_{2,1} \to M_{2,1}$ forgets the spin structure and makes $SM_{2,1}$ into a finite, \'etale cover of $M_{2,1}$,  $p_{bos}$ is the bosonic reduction of the morphism $p: \compbranch \to \modtwoone$ that sends $\pi: \tilx \to X$ to $X \in \modtwoone$ and $(\tilde{\phi}, \phi)$ to $\phi$, and $\tilde{M}_{d, \rho}$ is the moduli of branched covers of $M_{2,1}$ and $\tilde{M}_{d, \rho} \to M_{2,1}$ is the obvious projection.

The moduli functors $M_{g,n}$,  $SM_{g,n_S, n_R}$, $\frak{M}_{g,n_S, n_R}$, and $\compbranch$ each have a representable finite \'etale cover by a manifold, or supermanifold in the case $\frak{M}_{g,n_S, n_R} $ and $\compbranch$. The existence of such a cover means that these functors are Deligne-Mumford stacks, or superstacks in the case of $\frak{M}_{g,n_S. n_R}$ and $\compbranch$.  We prove that $\compbranch$ is Deligne-Mumford in Theorem \ref{app: DMstack} in the appendix of this paper. Henceforth, we will always confuse a Deligne-Mumford stack, or  superstack, with its \'etale cover, and all morphisms between Deligne-Mumford superstacks with the morphisms induced on their respective \'etale covers. 
\newline




The tangent set to a general functor $F: \on{sSch} \to \on{Groupid}$ at a point $f \in F(\Spec \bb)$ is the set of isomorphism classes in the groupoid $F_{/f}(\Spec D)$, i.e., isomorphism classes of first order deformations of $f$. 
where $F_{/f}$ is the groupoid of objects in $F$ which pullback to $f$ on $\Spec \bb$, and where $D$ denotes the super dual numbers. The set of isomorphism classes of first order deformations of objects in $\compbranch$, $\comp$, and $\modtwoone$, are described in Theorem \ref{app:deformation theorem},  Lemma \ref{deformRamond}, and Lemma \ref{definition}, respectively. 
\newline



\noindent The rest of this section deals with  proving the following two lemmas. 
\begin{lemma} \label{S1} The morphism $p: \compbranch \to \modtwoone$  makes $\compbranch$ into a finite cover of $\frak{M}_{2,1}$. In particular, $\compbranch$ is not projected by Corollary \ref{twoeight}.  \end{lemma}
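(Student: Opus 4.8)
The plan is to realise $p$ as a finite surjective morphism of superstacks and then apply Corollary \ref{twoeight}, taking as input the known non-projectedness of $\frak{M}_{2,1}$. I would begin with the bosonic reduction. The cartesian diagram \eqref{finite} identifies $(\compbranch)_{bos}$ with the fibre product $SM_{2,1}\times_{\pmtone}\tilde{M}_{d,\rho}$ and $p_{bos}$ with the base change of the projection $\tilde{M}_{d,\rho}\to\pmtone$. By Theorem 4 of \cite{husemoller1962ramified}, for a fixed genus-$2$ curve with its marked branch point there are only finitely many isomorphism classes of degree-$d$ branched covers with ramification configuration $\rho$, and at least one exists because $g=2\ge 2$ and $\rho$ meets the parity constraint; hence $\tilde{M}_{d,\rho}\to\pmtone$ is finite and surjective. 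Both properties are stable under base change, so $p_{bos}$ is finite and surjective.

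Next I would promote finiteness to the super level. Since the structure sheaf of any supermanifold is a finite (indeed locally free) module over the structure sheaf of its bosonic reduction, finiteness of a morphism of supermanifolds is detected on bosonic reductions; thus $p$ is finite because $p_{bos}$ is. Corollary \ref{bosmodriemann} makes this concrete: over any bosonic test scheme the fibre of $p$ over $X$ is precisely the finite set of branched covers of the underlying curve $C=X_{bos}$, so no extra covers appear in the odd directions. As the excerpt permits, I would run this argument after replacing the Deligne--Mumford superstacks $\compbranch$ and $\frak{M}_{2,1}$ by their \'etale covers, so that $p$ is literally a finite cover of supermanifolds and Corollary \ref{twoeight} applies.

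Finally I would invoke Corollary \ref{twoeight}. By \cite{donagi2015supermoduli} the space $\frak{M}_{2,1}=\frak{M}_{2,1,0}$ is non-projected --- the pair $(g,n_S)=(2,1)$ lies in the range $g\ge n_S+1\ge 2$ --- and this non-projectedness is witnessed already at second order by $\omega_2(\frak{M}_{2,1})\neq 0$. Applying Corollary \ref{twoeight} to the finite cover $p:\compbranch\to\frak{M}_{2,1}$ then yields $\omega_2(\compbranch)\neq 0$. Because $\omega_2\neq 0$ means the second truncation $(\compbranch)^2$ is not split, hence not projected by Example \ref{trunc}, and because every truncation of a projected supermanifold is projected, $\compbranch$ is not projected. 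The main obstacle I anticipate is the super-level finiteness step: one must check that finiteness and the resulting $\omega_2$-transfer are unaffected by the odd structure and survive the passage to \'etale covers of the superstacks. The Hurwitz-theoretic finiteness of $p_{bos}$ together with the bosonic-reduction correspondence of Corollary \ref{bosmodriemann} are what keep this under control.
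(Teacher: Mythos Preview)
Your proposal is correct and follows essentially the same route as the paper: reduce finiteness of $p$ to finiteness of $p_{bos}$, read off the latter from the cartesian diagram \eqref{finite} as a base change of the finite map $\tilde{M}_{d,\rho}\to M_{2,1}$, and then invoke Corollary \ref{twoeight}. You supply more detail than the paper does (justifying finiteness of $\tilde{M}_{d,\rho}\to M_{2,1}$ via Husemoller, spelling out surjectivity, and making explicit the input $\omega_2(\frak{M}_{2,1})\neq 0$ from \cite{donagi2015supermoduli}), but the argument is the same.
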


We let  $i: \compbranch \to \comp \to \compplain$, where $\comp \to \compplain$ forgets the NS punctures,  denote the morphism sending $\pi: \tilx \to X$ to $\tilx \in \compplain$. Note we are using the same notation $\tilx$ for the super Riemann surface  on which the NS punctures have been forgotten. 

    \begin{lemma} \label{S2} The morphism $i: \compbranch \to \compplain$ is an immersion of supermanifolds. Furthermore, the normal sequence associated to the immersion $i_{bos}: (\compbranch)_{bos} \to SM_{g,0, 2r}$ is split exact, and thus $\compplain$ is not projected by Corollary \ref{twoeleven}. \end{lemma}
 

    

\noindent Assuming the proof of Lemmas \ref{S1} and \ref{S2}, the proof of Theorem \ref{main:intro} goes as follows: 

\begin{proof}[Proof of main result.] The possible values of $g$ and $2r$ for which there exist a branched covering $\tilx \in \compplain$ for some $X \in \modtwoone$ are described in Lemma \ref{range:intro}. For this range of values, the associated supermoduli spaces $\compplain$ are all not projected by Lemma \ref{S2}. 
    
\end{proof}

\begin{proof}[Proof of Lemma \emph{\ref{S1}}.] Recall that a morphisms of supermanifolds is finite if and only if its bosonic reduction is a finite morphism of manifolds.  The bosonic reduction of $p$, denoted as $p_{bos}$, is the top horizontal arrow in diagram \eqref{finite}. It suffices to show that $p_{bos}$ is finite.  From diagram \ref{finite} we can see that $p_{bos}$ is the base change of the finite map $\tilde{M}_{d,\rho} \to M_{2,1}$, and is therefore itself finite. 
\end{proof}

\begin{proof}[Proof of Lemma \emph{\ref{S2}}:] 
The morphism 
\[ dp \vert_{\pi}: (T \compbranch) \vert_{\pi} \to H^1(X, \mc{A}_X) \] 
is an isomorphism by Theorem \ref{app:deform}. Given this isomorphism , the differential of $i$ at $\pi$ is 
\[ di \vert_{\pi}: H^1(X, \mc{A}_X) \to H^1(\tilx, \mc{A}_{\tilx}). \] 

The map $di$ is injective if and only if the pre-image of the trivial deformation $\tilx \times \Spec D$ in $H^1(\tilx, \mc{A}_{\tilx})$ is isomorphic to the trivial deformation of $X \times \Spec D$.  Suppose $di(\pi') = \tilx \times \Spec D$, then again by Corollary \ref{twoeight} we must have  $X' \cong X \times \Spec D $. 

For the second statement of the lemma, we need to show that the normal sequence associated to $i_{bos}$,
\[ 0 \to T (\compbranch)_{bos} \to i_{bos}^* T SM_{g,0,2r} \to N \to 0 \]
is split exact. To show this, we can 
repeat the proof of
Proposition 5.3 in \cite{donagi2015supermoduli}.
 
\end{proof}

\section{Appendix} 

In this section, we prove (Theorem \ref{app: DMstack}) that $\compbranch$ is a Deligne-Mumford superstack. We will first show that $\compbranch$ satisfies the conditions of the super Artin theorem, Theorem \ref{app: artin}, and is, therefore, an algebraic superstack. Once we have established that $\compbranch$ is algebraic, we can apply  Theorem 8.3.3 in \cite{olsson2016algebraic}: An algebraic stack $\mcx$ is Deligne-Mumford if and only if the diagonal map $\Delta: \mcx \to \mcx \times \mcx$ is formally unramified. The condition that $\Delta$ be formally unramified is equivalent to the condition that the automorphism group $\on{Aut}_{x}$ for $x \in \mcx(\Spec \bb)$ is a reduced finite group scheme over $\Spec \bb$. We will show this holds for $\pi \in \compbranch(\Spec \bb)$, concluding that $\compbranch$ is a Deligne-Mumford superstack. 

\subsection{Preliminaries}

In this section, we define the various functors, e.g., $D_v(M)$, $\on{Ob}_v(M)$, that appear in the statement of the super Artin theorem.
\newline 

\noindent \emph{Extensions of objects.} Let $\mc{X}:\sS^{op} \to \on{Groupoid}$ be a superstack over the category of affine superschemes, and let $A_0$ be a superalgebra. An \emph{extension} of an object $v \in \mc{X}(\Spec A_0)$ is a pair $(f:B \to A_0, v')$ where $f$ is a morphism in $\sS$ and $v'$ is an object in $\mc{X}(\Spec B)$ such that $v' \times_{\Spec B} \Spec A_0 \cong v$. 
We will define two categories, denoted by $\on{Def}_{v}(M)$ and $D_{v}(M)$, that keep track of certain classes of extensions of $v$. 
\newline

\noindent \emph{Extensions of superalgebras.} An \emph{(infinitesimal) extension} of a superalgebra $A_0$ by a coherent $A_0$-module $M$ is a superalgebra $A$ fitting into the short exact sequence
\[0 \to M \to A \to A_0 \to 0. \]
We will henceforth write extension when we mean infinitesimal extensions. 
Note that $M$ inherits a multiplication $M \cdot M$ from $A$, and the $A_0$-module structure on $M$ forces $M \cdot M=0$. We say that an extension $A$ is \emph{trivial} if $A \cong A_0 \oplus M$. An important example of a trivial extension is the super dual numbers, $\bb[t, \eta]/(t^2, \eta t),\ |t|=0, |\eta|=1$ in $\on{Ext}(\bb, \bb \oplus \Pi \bb)$.

We let $\on{Ext}(A_0, M)$ denote the set of isomorphism classes of extensions of $A_0$ by $M$, and let $\on{Coh}(A_0)$ denote the category of all coherent modules over $A_0$.  $\on{Ext}(A_0, M)$ is functorial in $M$; that is,  if $\phi: M \to M'$ is a morphism of $A_0$-modules, then there exists morphism $\on{Ext}(A_0,M) \to \on{Ext}(A_0,M')$ sending an extension $A$ to its pushout $\phi_* A$ in $\on{Ext}(A_0,M')$.  In other words, $\on{Ext}(A_0,-)$ is a category fibered in Sets over $\on{Coh}(A_0)$.  
\newline

\noindent \emph{Extension functors.} Fix $v \in \mc{X}(\Spec A_0)$, and $M \in \on{Coh}(A_0)$. The following functor keeps track of the extensions of $v$ to the Spec of all extensions of $A_0$ by all coherent $A_0$-modules:   
\begin{align*}
    \on{Def}_v(M) &: \on{Ext}(A_0,M) \to \on{Set} \\ 
    {} & f:A \to A_0 \mapsto  \on{Def}_v(M)(A)=\{ \text{isomorphism classes of extensions of $v$ over } \Spec A \}  
\end{align*}  
$\on{Def}_v(M)$ is functorial in $M$; that is, if $\phi: M \to M'$ is a morphism of $A_0$-modules, then there exist a morphism $\on{Dev}_v(M) \to \on{Def}_v(M')$ sending an extension $v' \in \on{Def}_v(M)(A)$ to its pushout $\phi_* v' \in \on{Def}_v(M')(\phi_*A)$.
We let
$\mc{D}ef_{v}(\mm)$ denote the sheafification of the presheaf 
$\Spec A_0 \supset U \mapsto \on{Def}_{v \vert_U}(\mm \vert_U)$ on $\Spec A_0$, where $\mm$ is the coherent sheaf of $\mc{O}_{\Spec A_0}$-modules associated to $M$.

The following functor keeps track of all isomorphism classes of extensions of $v$ to the Spec of all \emph{trivial extensions} of $A_0$ by all coherent $A_0$-modules: 
\begin{align*}
    D_{v} &: \on{Coh}(A_0) \to \on{Set} \\
    {} & M \mapsto D_{v}(M) = \{ \text{isomorphism classes of extensions of} \ v \ \text{to} \ \Spec(A_0 \oplus M) \}.
\end{align*}
where $A_0 \oplus M$ is the trivial extension of $A_0$ by $M$.  
We we write $\mc{D}_{v}(\mm)$ for the sheafification of the presheaf $\Spec A_0 \supset U \mapsto D_{v \vert_U}(\mm \vert_U)$ on $\Spec A_0$.
\newline

\noindent \emph{Obstructions. } There is a natural morphism, functorial in $M$, $\phi: \on{Def}_{v}(M) \to \on{Ext}(A_0,M)$  sending an extension $(f:A \to A_0, v')$ to $(f:A \to A_0) \in \on{Ext}(A_0,M)$. The extensions of $A_0$ by $M$ for which there exists an obstruction to lifting $v$ are parameterized by, 
\[ \on{Ob}_v(M):=  \on{Ext}(A_0,M)/\phi(\on{Def}_{v}(M)).\]
This definition was introduced in \cite{flenner1981kriterium}. Everything we have said in this section holds equally well for the sheaves $\mc{E}xt(A_0,\mm)$, $\mc{D}ef_{v}(\mm)$ and $\mc{O}b_v(\mm)$.

\begin{theorem}[Super Artin Theorem, \cite{ott2021artin}] \label{app: artin}  Let $\mxx$ be a limit preserving superstack and assume that Schlessinger's conditions hold. (These conditions guarantee that the tangent set to $\mcx$ is a finite dimensional super vector space). Then $\mxx$ is an algebraic superstack locally of finite type if and only if
\begin{enumerate}
    \item[\emph{(A1)}.] The diagonal morphism $\Delta: \mxx \to \mxx \times \mxx$ is represented by an algebraic superspace locally of finite type. 
    
    \item[\emph{(A2)}.] \emph{Effectivity}: If $R$ is a complete, local, Noetherian superalgebra, then the natural map 
    \[  \mxx( \spe R) \to \varprojlim \mxx( \spe R/\frk{R}^n)\] 
   is an equivalence of categories.

    \item[\emph{(A3)}.] \emph{Coherence of $\mc{O}b$}: For every superscheme $T$, object $v \in \mxx(T)$ and coherent $\otr$-module $\mc{M}$, $\mc{O}b_v(\mc{M})$ is a coherent $\ot$-module.
    
    
    \item[\emph{(A4)}.] \emph{Constructibility}: Let $v, T$ be as above and suppose that $T' \subseteq T$ is an irreducible reduced subspace of $T$. Then there is a Zariski open dense subset $V'\subseteq T'$ such that for each closed point $t \in V'$, the canonical morphisms 
\begin{itemize}
    \item[\emph{(a)}] $\mc{D}_v(\otpr) \otimes k(t) \to \mc{D}_v(\otpr \otimes k(t))_t$  and $\mc{D}_v(\Pi \otpr) \otimes k(t) \to \mc{D}_v(\Pi \otpr \otimes k(t))_t$ are  bijective, and
    \item[\emph{(b)}] $\mc{O}b_v(\otpr) \otimes k(t) \to \mc{O}b_v(\otpr \otimes k(t))_t$ and $\mc{O}b_v(\Pi \otpr) \otimes k(t) \to \mc{O}b_v(\Pi \otpr \otimes k(t))_t$  are injective. 
\end{itemize}

\item[\emph{(A5)}.] The sheaf $\mc{D}$ is compatible with completion: If $T_0$ is a reduced finite type scheme, $a_0 \in \mxx(T_0)$, $t \in T_0$ and $\mm$ is a coherent $\oto$-module, then \[ \mc{D}_{a_0}(\mm) \otimes_{\mc{O}_{T_0}} \wh{\mc{O}}_{T_0,t} \lgr \varprojlim \mc{D}_{a_0}(\mm/\frak{m}^n \mm)_t \]
    
    \item[\emph{(A6)}.]  The sheaf $\mc{D}$ is compatible with \'etale localization: If $e: S_0 \to T_0$ is an \'etale morphism of reduced schemes, with $e^*(a_0)=b_0$, and $\mm$ is a coherent $\oto$-module, then
\[
    \mc{D}_{b_0}(\mm \otimes_{\oto} \mc{O}_{S_0}) \cong \mc{D}_{a_0}(\mm) \otimes_{\oto} \mc{O}_{S_0}
\]
 \end{enumerate}
\end{theorem}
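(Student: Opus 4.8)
The plan is to establish both implications of the criterion; the substantive content lies entirely in showing that conditions (A1)--(A6) force $\mxx$ to be algebraic, so I treat the converse first. If $\mxx$ is already an algebraic superstack locally of finite type, then (A1) is immediate, while the sheaves $\mc{D}_v(\mm)$ and $\mc{O}b_v(\mm)$ can be computed from the super cotangent complex of $\mxx$; coherence (A3), effectivity (A2), and the compatibility statements (A4)--(A6) then follow from standard flat base change and descent for quasi-coherent cohomology, applied separately in the even and odd directions. This last point is exactly why each of (A4)--(A6) is stated twice, once for $\otpr$ and once for $\Pi\otpr$.

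For the main implication I would follow Artin's original strategy, adapted to the $\z_2$-graded setting. Fix a point $x \in \mxx(\Spec \bb)$. Schlessinger's conditions produce a formal versal deformation of $x$, namely a complete local superalgebra $R$ together with a compatible system $\widehat{\xi} \in \varprojlim \mxx(\Spec R/\frk{R}^n)$; finiteness of the super tangent space guarantees that $R$ is Noetherian, being a quotient of a formal super power series ring in finitely many even and odd variables. Effectivity (A2) then algebraizes $\widehat{\xi}$ to an object $\xi \in \mxx(\Spec R)$, and a super version of Artin approximation replaces $R$ by a superalgebra $A$ of finite type and produces a morphism $\Spec A \to \mxx$ that is formally versal at $x$.

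The heart of the argument is openness of versality: I must show that the locus in $\Spec A$ over which $\Spec A \to \mxx$ is smooth is open, so that restricting to it gives a smooth chart through $x$, and the union of such charts over all points is a smooth atlas for $\mxx$. This is where (A3)--(A6) do their work. Coherence of $\mc{O}b_v$ (A3) makes the obstruction a coherent sheaf whose non-smooth locus is closed; constructibility (A4) controls the fibrewise behaviour of $\mc{D}_v$ and $\mc{O}b_v$ along reduced subspaces; and compatibility with completion (A5) and with \'etale localization (A6) allow me to pass between the formal, local, and global pictures. Together with the representability of the diagonal (A1), a smooth atlas exhibits $\mxx$ as an algebraic superstack locally of finite type.

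The main obstacle is the super Artin approximation step, together with the parity bookkeeping required throughout openness of versality. In the purely even case both are Artin's theorems, but here every deformation- and obstruction-theoretic statement must be tracked simultaneously in the even and odd directions, which is precisely the role of the paired hypotheses (A4)--(A6). The cleanest route is to exploit that the ideal generated by the odd part of a finite-type superalgebra over $\bb$ is nilpotent: modding out by this ideal $J$ recovers the ordinary bosonic reduction, to which classical Artin approximation applies, and one then lifts step by step up the filtration $A \supset J \supset J^2 \supset \cdots$, controlling the obstruction to each lift by the coherent sheaf $\mc{O}b_v$. Verifying that versality stays open through this filtration---not merely on the bosonic reduction---is the delicate point, and is where the even/odd compatibility in (A5) and (A6) is essential.
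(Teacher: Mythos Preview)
The paper does not prove this theorem at all: it is quoted verbatim as an external result from \cite{ott2021artin} and then \emph{applied} in the proof of Theorem~\ref{app: DMstack}. There is therefore no ``paper's own proof'' to compare your proposal against. What the paper actually does is verify, for the specific superstack $\compbranch$, that conditions (A1)--(A6) hold; the general criterion itself is taken as a black box.

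As for your sketch on its own terms: the outline is the expected one (Schlessinger gives a formal hull, (A2) effectivizes, Artin approximation produces a finite-type chart, and (A3)--(A6) feed into openness of versality), and your remark that the odd ideal $J$ is nilpotent so one can bootstrap from the bosonic reduction is the right organizing principle. But the sketch remains just that: the two genuinely nontrivial ingredients you flag --- a super Artin approximation theorem and the propagation of versality through the $J$-adic filtration --- are asserted rather than carried out, and these are precisely the places where the proof in \cite{ott2021artin} does real work. In particular, ``lifting step by step up the filtration'' is not automatic: one needs to know that formal smoothness at a point of the bosonic reduction implies formal smoothness of the full super morphism, and this requires the paired even/odd hypotheses in a more precise way than your last paragraph indicates. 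So the proposal is a correct high-level roadmap, but it does not yet constitute a proof, and in any case the present paper offers nothing to benchmark it against.
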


\subsection{Deformation theory of branched covers.}  In this section, we will describe the deformation theory of the moduli superstack $\compbranch$ of branched covers of super Riemann surfaces.

 By the deformation theory of a (reasonable) superstack $\mcx$, we mean the following three objects associated with a fixed, but arbitrary, object $v \in \mcx(\Spec \bb)$: The group of infinitesimal automorphisms of $v \times \Spec D$, the set of isomorphism classes of first order deformations of $v$, and the obstructions to lifting $v$. The group of infinitesimal automorphisms is the tangent space of automorphism group $\on{Aut}(v)$ at $\on{id}_v$, and the set of isomorphism classes of first order deformations of $v$ is the tangent set to $\mcx$ at $v$. We do not know a tangent description for the obstructions to lifting.

\begin{example} \normalfont Let $X \in \comp(\Spec \bb)$. The group infinitesimal automorphisms $\on{Aut}_{/X}(X \times \Spec D)$ of $X'$ is equal to $H^0(X, \mc{A}_X)$, the set of isomorphism classes of first order deformations of $X$ is equal to $H^1(X, \mc{A}_X)$, and the obstructions to lifting $X$ are parameterized by $H^2(X, \mc{A}_X)$. 
    
\end{example}

\noindent \emph{First order deformations.} Let $D$ denote the super dual numbers , $\bb[t, \eta]/(t^2, \eta t), \ |t|=0, |\eta|=1$, and let $(\pi: \tilx \to X) \in \compbranch(\Spec \bb)$. The tangent set to $\compbranch$ at $\pi$ is the set $\on{Def}_{\pi}:=\on{Def}_{\pi}(\Spec D)$ of  isomorphism classes of first order deformations of $\pi$, that is triples $(\tilx', X', \pi')$ where $\tilx',X'$ are first order deformations of $\tilx, X$, respectively, and $\pi': \tilx' \to X'$ is a morphism making the following diagram commute  
\begin{equation} \label{app:deform}
    \begin{tikzcd} 
        \tilx \arrow[r] \arrow[d, "\pi"] & \tilx' \arrow[d,"\pi'"] \\
        X \arrow[r] & X'. 
    \end{tikzcd}
\end{equation}
 We say that $(\tilx', X',\pi')$ is \emph{trivial} if there exists an isomorphism of branched covers \[ (\tilx', X', \pi') \lgr (\tilx \times \Spec D, X \times \Spec D, \pi \times \on{id}_D). \]  
 \newline 
 
\noindent \emph{Infinitesimal automorphisms.} The restriction of an automorphism of a branched cover $\phi: \pi \times  \Spec D \lgr \pi \times \Spec D$ to $\Spec \bb$, denoted by $\phi \vert_{\bb}$, is an automorphism of $\pi: \tilx \to X$, and we call
$\phi$ an \emph{infinitesimal automorphism} if $\phi \vert_{\bb} \in \on{Aut}(\pi)$ is equal to the identity morphism $ \on{id}_{\pi} \in \on{Aut}(\pi)$. The group of infinitesimal automorphisms is, denoted by $\on{InfAut}(\pi)$, is the tangent space of $\on{Aut}(\pi)$ at $\on{id}_{\pi} \in \on{Aut}(\pi)$. 
\begin{lemma} $\on{InfAut}(\pi)=\{ \on{id} \}$. In particular, there are no non-trivial infinitesimal automorphisms.
\end{lemma}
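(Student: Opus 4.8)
The plan is to reduce the statement to a vanishing theorem on the genus-two base curve $X$, where everything is rigid, so that the large genus of $\tilx$ never enters. First I would unwind the definition of an infinitesimal automorphism of $\pi$. By definition it is an automorphism $\phi$ of $\pi \times \Spec D$ restricting to $\on{id}_\pi$ over $\Spec \bb$; a morphism of branched covers is a commuting pair $(\tilde{\phi}, \phi_0)$ as in \eqref{isomorphism}, so differentiating at the identity shows that $\phi$ is the same datum as a pair $(\tilde v, v)$, where $\tilde v \in H^0(\tilx, \mc{A}_{\tilx})$ is a superconformal vector field on $\tilx$, $v \in H^0(X, \mc{A}_X(-\nspunc))$ is a superconformal vector field on $X$ vanishing along the NS puncture, and $\tilde v$ and $v$ are $\pi$-related, i.e.\ $d\pi(\tilde v) = v \circ \pi$. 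Here I am using Lemma~\ref{deformRamond} upstairs and the $H^0$-analogue of Lemma~\ref{deformSRSns} downstairs to identify the two relevant tangent spaces.

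The heart of the matter is the vanishing $H^0(X, \mc{A}_X(-\nspunc)) = 0$. Since $X$ has dimension $(1|1)$ it is split by Example~\ref{oneonesplit}, so I may compute directly on the bosonic reduction: let $C = X_{bos}$ be the genus-two curve, let $\mc{L}$ be the spin line bundle (so $|\deg \mc{L}| = g-1 = 1$), and let $p \subset C$ be the reduced NS point, so that $\ox = \oc \oplus \Pi \mc{L}$. The even line bundle $\mc{A}_X(-\nspunc) \cong \mc{D}^2(-\nspunc)$ contributes even global sections $H^0(C, T_C(-p))$ and odd global sections $H^0(C, T_C \otimes \mc{L}(-p))$. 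Their degrees are $(2-2g)-1 = -3$ and $-3 \pm 1$ respectively, all strictly negative on the genus-two curve $C$; hence both groups vanish and $H^0(X, \mc{A}_X(-\nspunc)) = 0$. Therefore $v = 0$ for every infinitesimal automorphism of $\pi$.

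It then remains to deduce $\tilde v = 0$ from $v = 0$. With $v = 0$ the field $\tilde v$ is vertical, $d\pi(\tilde v) = 0$. Because $\pi$ is a finite branched cover it is \'etale over the complement of its finite ramification locus, a dense open $U \subset \tilx$, and there $d\pi$ is an isomorphism on tangent sheaves; thus $\tilde v|_U = 0$. As $\mc{A}_{\tilx}$ is an invertible sheaf on the integral supercurve $\tilx$ and $U$ is dense, a section vanishing on $U$ vanishes identically, so $\tilde v = 0$. Hence $(\tilde v, v) = 0$, which is exactly the assertion $\on{InfAut}(\pi) = \{\on{id}\}$.

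The main obstacle I anticipate is entirely at the first step: making the passage from an automorphism of the branched cover over $\Spec D$ to the $\pi$-related pair $(\tilde v, v)$ fully rigorous, in particular checking that the commuting square differentiates to $d\pi(\tilde v) = v \circ \pi$ and that $v$ genuinely lands in the NS-twisted sheaf $\mc{A}_X(-\nspunc)$ rather than in $\mc{A}_X$, so that the degree drop by $\deg p$ is available. Once the compatible-pair description and this twist are secured, the two vanishing arguments---the genus-two degree count downstairs and the rigidity of vertical fields upstairs---are short, and, as noted, the construction never uses anything about $\tilx$ beyond its being an integral cover of the fixed genus-two base.
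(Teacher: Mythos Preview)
Your argument is correct, but it is considerably more elaborate than the paper's. The paper dispatches the lemma in one line: $\on{Aut}(\pi) \subset \on{Aut}(X) \times \on{Aut}(\tilx)$, and both factors are finite because $X$ and $\tilx$ have genus $>1$; over $\bb$ a finite group scheme is \'etale, so its tangent space at the identity vanishes. You instead unwind $\on{InfAut}(\pi)$ as a $\pi$-related pair $(\tilde v,v)$, kill $v$ by a degree count on the genus-two base, and then kill $\tilde v$ by a verticality argument on the \'etale locus. Both routes are ultimately the same vanishing $H^0(\mc{A})=0$ in genus $\ge 2$; the paper invokes it as the classical finiteness of automorphisms, while you redo the even/odd degree computation by hand.

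Your detour through verticality is unnecessary, though not wrong. Since $\tilx$ itself has genus $\ge 2$ (immediate from Riemann--Hurwitz over a genus-two base), the same degree count you ran on $X$ gives $H^0(\tilx,\mc{A}_{\tilx})=0$ directly, so $\tilde v=0$ without ever using the $\pi$-relation. Your stated motivation---to avoid knowing the genus of $\tilx$---does not really buy anything here, since that genus is forced to be large. The one place to be careful in your write-up is exactly the step you flagged: an automorphism of a branched cover must preserve $\mc{R}$ as well as $\nspunc$, so $\tilde v$ actually lands in $\mc{A}_{\tilx}(-\tilde{\mc{P}}_{NS})$ rather than $\mc{A}_{\tilx}$, but this only helps the vanishing and your density argument is insensitive to the twist anyway.
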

\begin{proof} $\on{Aut}(\pi) \subset \on{Aut}(X) \times \on{Aut}(\tilx)$ and $\on{Aut}(X),\on{Aut}(\tilx)$ are finite for $g >1$. 
\end{proof}

\noindent \emph{Obstructions.} There are no obstructions to lifting $\pi$ by Proposition 3.3 in \cite{donagi2015supermoduli}, appropriately adapted to the case that $\pi$ has a non-zero number of ramification points of even local degree. 
\newline

The main result of this section is the following theorem which identifies the set of isomorphism classes of first order deformation of a branched cover $\pi: \tilx \to X$ with the super vector space  $H^1(X, \mc{A}_X)$. 
\begin{theorem} \label{app:deformation theorem} $\on{Def}_{\pi}= H^1(X, \mc{A}_X)$. In particular, the tangent set to $\compbranch$ at $\pi$ naturally admits the structure of a super vector space of dimension $4|3$. 
    
\end{theorem}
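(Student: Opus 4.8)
The plan is to show that the forgetful differential $\dpi \colon \defpi \to \on{Def}_X = H^1(X, \mc{A}_X)$, which sends a first order deformation $(\tilx', X', \pi')$ to the underlying deformation $X'$ of the base, is an isomorphism of super vector spaces; the identification $\on{Def}_X = H^1(X,\mc{A}_X)$ is given by Lemma \ref{deformSRSns}. The three standard ingredients of the deformation theory of $\compbranch$ are already in place: the infinitesimal automorphisms of $\pi$ vanish, and the obstructions to lifting $\pi$ vanish (the Obstructions paragraph, via Proposition 3.3 of \cite{donagi2015supermoduli} adapted to even local degree). Thus the content that remains is precisely the claim that $p\colon \compbranch \to \modtwoone$ is formally \'etale at $\pi$, i.e.\ that $\dpi$ is bijective.

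First I would establish surjectivity of $\dpi$ by lifting deformations of the base to deformations of the cover. Given a first order deformation $X'$ of $X$ over $\Spec D$, I would produce $(\tilx', \pi')$ using the local normal forms \eqref{localdescRSR} and \eqref{localdescRSNS}: away from the branch locus $\pi$ is \'etale and lifts uniquely, while near each ramification component the cover is rigidly pinned by the equations $z = w^{2\ell}, \theta = w^\ell \theta'$ (resp.\ $z = w^{2\ell+1}, \theta = w^{\ell}\theta'$), so a lift exists locally and the superconformal structure $\pi^*\mc{D}'$ is determined. The local lifts glue because the obstruction to gluing lies in the relative obstruction group, which vanishes by the unobstructedness cited above; this yields a global $\pi'\colon \tilx' \to X'$ with $\dpi(\pi') = X'$.

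Next I would prove injectivity via rigidity of the cover relative to its base. Organizing the relative deformation theory into the exact sequence
\[ 0 \to \on{Def}_{\pi/X} \to \defpi \xrightarrow{\dpi} \on{Def}_X, \]
it suffices to show that the relative term $\on{Def}_{\pi/X}$ --- first order deformations of $\tilx$ together with a map to the fixed trivial family $X \times \Spec D$ --- vanishes. On bosonic reductions this is exactly the statement that $\tilde{M}_{d,\rho} \to M_{2,1}$ is \'etale, which follows from the absence of further constraints on branched covers (Theorem 4 of \cite{husemoller1962ramified}) used in diagram \eqref{finite}: finite covers of a fixed curve with fixed ramification data are rigid. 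The odd direction is handled by the same mechanism. Because every branched cover in question is semi-\'etale (Corollary \ref{bosmodriemann} and Lemma \ref{semi}), the odd part of $\pi^{\#}$ is an isomorphism onto its image, so there is no room for odd relative deformations either. Hence $\on{Def}_{\pi/X} = 0$ and $\dpi$ is injective.

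Finally, combining surjectivity and injectivity gives $\defpi \cong H^1(X, \mc{A}_X)$, and the super vector space structure is transported from $\on{Def}_X$. For the dimension, $X \in \modtwoone$ has genus $g = 2$ and one NS puncture, so $\mc{A}_X \cong \mc{D}^2(-\nspunc)$ and super Riemann--Roch gives $\dim_{\bb} H^1(X, \mc{A}_X) = (3g-3+n_S \mid 2g-2+n_S) = (4 \mid 3)$. I expect the injectivity step --- showing the cover admits no nontrivial relative deformations in the odd direction, where the base $X$ carries an NS puncture while $\tilx$ carries Ramond punctures --- to be the main obstacle, since the change of puncture type means the relative rigidity cannot simply be read off from the ordinary (bosonic) \'etaleness of $\tilde{M}_{d,\rho} \to M_{2,1}$ and must be checked directly in the local forms \eqref{localdescRSR} and \eqref{localdescRSNS}.
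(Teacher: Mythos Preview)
Your overall strategy matches the paper's: establish that $\dpi$ is an isomorphism by treating the even and odd components separately, and handle the even component by reducing to the \'etaleness of $\tilde{M}_{d,\rho} \to M_{2,1}$ pulled back along diagram~\eqref{finite}. Your surjectivity argument via local lifts and unobstructedness is in fact more explicit than what the paper writes.

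The gap is in your odd injectivity argument. You invoke Lemma~\ref{semi} and Corollary~\ref{bosmodriemann} to conclude that ``every branched cover in question is semi-\'etale'', but those results are stated and proved only for families over a \emph{purely bosonic} base, whereas an odd relative deformation lives over $\Spec k[\eta]$, which is not bosonic. Over $\Spec k[\eta]$ the splitting $\mc{O}_S = \mc{O}_C \oplus \Pi L$ that underlies the definition of semi-\'etale is no longer available, so the assertion that ``the odd part of $\pi^{\#}$ is an isomorphism onto its image, hence there is no room for odd relative deformations'' does not follow from those results. You correctly flag this step as the main obstacle, but the mechanism you propose (semi-\'etaleness, or a direct check in the local forms) is not what closes it.

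The paper handles the odd direction by a different route (Lemma~\ref{appi:oddtangentspace}): the trivial odd deformation $X \times \Spec k[\eta]$ is a \emph{split} $(1|2)$-supermanifold, and Corollary~\ref{twoeight} then forces any finite cover $\tilx'$ of it to be split as well, which identifies $\tilx'$ with the trivial odd deformation $\tilx \times \Spec k[\eta]$. Once $\tilx'$ is pinned down, uniqueness of the map $\pi'$ follows from Lemma~\ref{appi:hzero}, namely the vanishing $H^0(\tilx, \mc{A}_{\tilx}(-F)) = 0$. So the missing idea is to trade the semi-\'etale framing (which is tied to bosonic bases) for the split/projected framing of Corollary~\ref{twoeight}, which is insensitive to the parity of the base.
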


The proof of Theorem \ref{app:deformation theorem} occupies the rest of this section. 
\newline 

\noindent \emph{Variants on $\on{Def}_{\pi}$.} Let $\on{Def}_{\pi}(X')$ denote the set of all isomorphism classes of first order deformations of $\pi$ with $X'$ fixed. The elements of $\on{Def}_{\pi}(X')$ are pairs $(\tilx', \pi')$ where $\pi': \tilx' \to X'$ is a  morphism making diagram \eqref{app:deform} commute. Similarly, let $\on{Def}_{\pi}(\tilx',X')$ denote the set of all isomorphism classes first order deformations of $\pi$ with $\tilx'$ and $X'$ fixed. The elements of $\on{Def}_{\pi}(\tilx', X')$ are morphisms $\pi': \tilx' \to X'$ making the diagram \eqref{app:deform} commute.   
\newline

Let $(\pi: \tilx \to X) \in \compbranch(\Spec \bb)$, let $\nspunc$ denote the one NS puncture on $X$, and recall that $\nspunc$ is equal to the branch locus of $\pi$. 
Set $F:=\pi^{-1}(\nspunc) \subset \tilx$, and recall that the ramification divisor $\mc{R} \subset F$, and recall that $\mc{R}=\tilde{\mc{P}}_R + \tilde{\mc{P}}_{NS}$, where $\tilde{\mc{P}}_R$ is the divisor of ramification divisors of even  local degree and corresponding to Ramond punctures, while $\tilde{\mc{P}}_{NS}$ is the divisor of ramification divisors (or, by duality, ramification points), of odd local degree and corresponding to NS punctures. The divisors (or, by duality, points) in the complement $F -  \mc{R}$ also correspond to NS punctures. In fact, in the examples we use for proving the main theorem, $\nspunc$ is empty because none of the local degrees is $>2$, so the only NS points we get are in $F-\mc{R}$.

\begin{lemma} \label{appi:hzero} 
 There is a one-to-one correspondence between $\on{Def}_{\pi}(\tilx', X')$ and $H^0(\tilx, \mc{A}_{\tilx}(-F)$, where $\pi^{-1}(\nspunc)$ is the pre-image of the one NS puncture $\nspunc$ on $X$.  In particular, since \[ H^0(\tilx, \mc{A}_{\tilx}(-F))=0, \]
 there is at most one map $\pi': \tilx' \to X'$ making the diagram \eqref{app:deform} commute for any fixed pair of first order deformations $\tilx', X'$ of $\tilx, X$, respectively. 
    
\end{lemma}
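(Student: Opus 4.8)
The plan is to treat this as the standard deformation theory of lifting a morphism between two fixed square-zero extensions. Fix first order deformations $\tilx'$ and $X'$ of $\tilx$ and $X$, each as a super Riemann surface (so with its superconformal structure and its punctures fixed), and recall that the ideal of $\tilx\subset\tilx'$ is $\mc{O}_{\tilx}\otimes J$ with $J=(t,\eta)\cong\bb^{1|1}$, and likewise for $X\subset X'$. A lift $\pi'$ is an even, superconformal ring homomorphism $(\pi')^{\#}$ reducing to $\pi^{\#}$ modulo $J$. Given two lifts, their difference $\delta:=(\pi_1')^{\#}-(\pi_2')^{\#}$ is a derivation $\mc{O}_X\to\mc{O}_{\tilx}\otimes J$ over $\pi^{\#}$ which kills the ideal of $X$ in $X'$; conversely adding such a derivation to one lift produces another. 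Hence $\defpi(\tilx',X')$, \emph{if nonempty}, is a torsor (in general a pseudotorsor) under the group of such derivations. Because $\pi'$ is required to be even and $J\cong\bb^{1|1}$, the $t$- and $\eta$-components of $\delta$ are respectively an even and an odd superconformal derivation, so after reorganizing by parity (using \eqref{longlist} and the projection formula) this group is naturally the full super vector space $H^0(\tilx,\pi^*\mc{A}_X)$.

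The second step is to identify the relevant subsheaf as $\mc{A}_{\tilx}(-F)$. Since $\pi'$ is a morphism of super Riemann surfaces, $\delta$ must preserve the superconformal structures on $\tilx'$ and $X'$, so it is $\mc{A}$-valued rather than merely $\mc{T}$-valued; and since $\pi'$ must branch along the \emph{fixed} divisor $\mc{B}=\nspunc\subset X'$, the derivation must preserve its preimage $F=\pi^{-1}(\nspunc)=\pi^*\nspunc$ on $\tilx$. Working in the local normal forms \eqref{localdescRSR} and \eqref{localdescRSNS}, with $\mc{D}_X$ and $\mc{D}_{\tilx}$ written as in \eqref{localform} and \eqref{localformramond}, one computes that pulling superconformal vector fields back along $\pi$ identifies $\pi^*\mc{A}_X$ with $\mc{A}_{\tilx}$ twisted up by the ramification divisor (the super Hurwitz relation), and that the requirement of fixing $F$ cuts this down precisely to the subsheaf $\mc{A}_{\tilx}(-F)\subseteq\pi^*\mc{A}_X$. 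This local identification of the exact twist is the \textbf{main obstacle}; the rest of the argument is formal.

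Finally, I would deduce the vanishing and conclude. As $F$ is effective, $\mc{A}_{\tilx}(-F)\subseteq\mc{A}_{\tilx}$, so $H^0(\tilx,\mc{A}_{\tilx}(-F))\hookrightarrow H^0(\tilx,\mc{A}_{\tilx})$. The target is the space of infinitesimal automorphisms of $\tilx$; by \eqref{nakRam} we have $\mc{A}_{\tilx}\cong\mc{D}_{\tilx}^2$, whose bosonic reduction is a line bundle of negative degree on $\tilc$ since $g(\tilx)\ge 5r+1\ge 2$ (equivalently, $\on{Aut}(\tilx)$ is finite for $g>1$, as already used to show $\on{InfAut}(\pi)=\{\on{id}\}$). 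Hence $H^0(\tilx,\mc{A}_{\tilx})=0$, and therefore $H^0(\tilx,\mc{A}_{\tilx}(-F))=0$. The pseudotorsor $\defpi(\tilx',X')$ is thus empty or a single point, so for each fixed pair of first order deformations $\tilx',X'$ there is at most one morphism $\pi':\tilx'\to X'$ making \eqref{app:deform} commute, which is exactly the asserted correspondence with the one-element set $H^0(\tilx,\mc{A}_{\tilx}(-F))=\{0\}$.
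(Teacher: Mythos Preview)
Your approach is correct and essentially the same as the paper's: both reduce to the standard deformation-theoretic fact that lifts of a morphism along fixed square-zero extensions form a pseudotorsor under the relevant superconformal derivations, and both identify the governing sheaf as $\mc{A}_{\tilx}(-F)$. The paper packages the identification through a formal chain of adjunction isomorphisms (invoking Lemma~\ref{technical} and \eqref{longlist}) in place of your local super-Hurwitz computation, and it simply asserts the vanishing of $H^0$ rather than spelling out, as you do, the argument via $H^0(\tilx,\mc{A}_{\tilx})=0$ for $g>1$.
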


\begin{proof} Let $\psi$ denote the composition $\tilx \overset{\pi}{\to} X \to X'$ in diagram \eqref{app:deform} and set $I=\on{ker}(\mc{O}_{\tilx'} \to \mc{O}_{\tilx})$.  By flatness, $I \cong f_0^*(J)$ where $J=\on{ker}(D \to \bb)$ and $f_0: \tilx \to \Spec \bb$ is the structure morphism. In particular, $I$ is generated by $t$ and $\eta$ as a $\mc{O}_{\tilx}$-module, and therefore $I \cong \mc{O}_{\tilx} \oplus \Pi \mc{O}_{\tilx}$. 

There is a one-to-one correspondence between the set of arrows $\pi'$ filling in the dotted arrow in the below diagram, i.e., the set of all lifts of $\psi$, and the set $\on{Der}_{\pi}(\tilx', X')$. : 
    \begin{equation}
        \begin{tikzcd}
            \tilx \arrow[r] \arrow[d, "\psi"] & \tilx' \arrow[dl, dotted, "\pi'"] \\
            X' & {} 
        \end{tikzcd}
    \end{equation}To prove the lemma it therefore suffices to  show that there is bijection from the set of all lifts of $\psi$ to $H^0(\tilx, \mc{A}_{\tilx}(-\mc{P}_R))$

    Since $X'$ is smooth over $\Spec D$, there always exists a lift $\pi'$ of $\psi$ locally on $\tilx$. The difference between any two such lifts is an even superconformal derivation $d \in \on{Der}_D^{scf}(\mc{O}_{X'},\ \psi_*I)(-\nspunc')$ with values in $\psi_*I$, and preserving the one NS puncture $\nspunc'$ on $X'$, i.e., the a lift of $\psi$ is a section of $H^0(X',\on{Der}_D^{scf}(\mc{O}_{X'},\ \psi_*I)(-\nspunc'))$.  The proof of the lemma now follows from the following isomorphisms: 
    \begin{align*} \on{Der}_D^{scf}(\mc{O}_{X'},\ \psi_*I)(-\nspunc') & = \on{Der}_D^{scf}(\oxprime, \psi_*I \otimes \mc{O}(-\nspunc')) & (\text{see} \ \eqref{NSder}) \\
    {} & = \on{Der}_D^{scf}(\oxprime(\nspunc'), \psi_* I) & {} \\
    {} & = \on{Der}_k^{scf} (\psi^*\oxprime(\nspunc'), I) & (\text{Adjunction}) \\
     {} &= \underline{\on{Der}}_k^{scf}(\psi^* \oxprime(\nspunc'), \mc{O}_{\tilx}) & (\text{Lemma} \ \ref{technical}) \\ 
     {} & = \underline{\on{Der}}_k^{scf}(\mc{O}_{\tilx}(F), \mc{O}_{\tilx}) & \psi^*\nspunc'=F, \ \psi^* \oxprime \cong \mc{O}_{\tilx} \\
    {} & =  \mc{A}_{\tilx}(-F) & (\text{see} \ \eqref{longlist}).
    \end{align*}

\end{proof}

We can use the morphisms $D \to k[t]/t^2$ and $D \to k[\eta]$, to pullback $\pi$ to an even and odd first order deformation of $\pi$,  respectively. This breaks $\on{Def}_{\pi}(\Spec D)$ into a direct sum of its even component $\on{Def}_{\pi}^+=\on{Def}_{\pi}(\Spec k[t]/t^2)$ and odd component $\on{Def}_{\pi}^-=\on{Def}_{\pi}(\Spec k[\eta])$ corresponding to the even and odd components of the tangent space to $\compbranch$ at $\pi$.

    

\begin{lemma} \label{appi:oddtangentspace} If $(\tilx', \pi') \in \on{Def}_{\pi}^-(X \times \Spec k[\eta])$, then $\tilx' \cong \tilx \times \Spec k[\eta]$ and $\pi' \cong \pi \times \Spec k[\eta]$.
\end{lemma}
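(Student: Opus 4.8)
The plan is to reduce the statement to a claim about $\tilx'$ alone and then prove rigidity of the cover in the odd direction. By Lemma \ref{appi:hzero} the set of morphisms $\pi'$ completing \eqref{app:deform} for fixed $\tilx'$ and $X'$ is a torsor under $H^0(\tilx,\mc{A}_{\tilx}(-F))=0$, so there is at most one such $\pi'$. Since $\pi\times\on{id}$ is visibly one such morphism when $\tilx'=\tilx\times\Spec k[\eta]$ and $X'=X\times\Spec k[\eta]$, it suffices to prove that every odd first order deformation $\tilx'$ covering the trivial deformation $X\times\Spec k[\eta]$ is itself trivial; the claim $\pi'\cong\pi\times\Spec k[\eta]$ then follows from this uniqueness.

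To prove that $\tilx'$ is trivial I would exploit that $\pi$ is a branched cover, hence \'etale away from its branch divisor $\nspunc$, and that $X\hookrightarrow X'$ and $\tilx\hookrightarrow\tilx'$ are purely odd, in particular nilpotent, thickenings. The superconformal structure on $\tilx'$ is by definition $(\pi')^{*}\mc{D}_{X'}$, and because $X'=X\times\Spec k[\eta]$ carries the product superconformal structure, $\tilx'$ is completely determined by the finite flat $\mc{O}_{X'}$-algebra $\pi'_*\mc{O}_{\tilx'}$, which is a flat deformation of $\pi_*\mc{O}_{\tilx}$ over $k[\eta]$. Over the complement of $\nspunc$, where $\pi$ is \'etale, the rigidity (topological invariance) of finite \'etale covers under nilpotent thickenings forces this algebra to be the trivial extension $\pi_*\mc{O}_{\tilx}\otimes_k k[\eta]$, so that $\pi'$ restricted there agrees with the pullback of $\pi$.

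The step I expect to be the main obstacle is ruling out a nontrivial odd deformation concentrated along the ramification divisor $\mc{R}$, where $\pi$ fails to be \'etale. Here I would work in the local normal forms \eqref{localdescRSR} and \eqref{localdescRSNS}: near each component of $\mc{R}$ the cover reads $z=w^{2\ell},\ \theta=w^{\ell}\theta'$ in the Ramond case or $z=w^{2\ell+1},\ \theta=w^{\ell}\theta'$ in the NS case, with the local degree a discrete invariant that is fixed throughout a connected deformation. One checks that imposing that $(\pi')^{*}\mc{D}_{X'}$ again be a superconformal structure with the prescribed Ramond or NS degeneration leaves no free odd parameter in these local models, so the local pieces of $\pi'$ coincide with those of $\pi\times\on{id}$. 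Patching the \'etale rigidity off $\nspunc$ with this local rigidity along $\mc{R}$ yields $\pi'_*\mc{O}_{\tilx'}\cong\pi_*\mc{O}_{\tilx}\otimes_k k[\eta]$, hence $\tilx'\cong\tilx\times\Spec k[\eta]$, and finally $\pi'\cong\pi\times\Spec k[\eta]$ by the uniqueness established in the first paragraph.
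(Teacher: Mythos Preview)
Your reduction via Lemma \ref{appi:hzero} to showing $\tilx'\cong\tilx\times\Spec k[\eta]$ matches the paper's final step, but the paper proves triviality of $\tilx'$ by a much shorter and entirely different route. The paper simply observes that $X\times\Spec k[\eta]$, being a trivial odd deformation, is split; since $\pi'$ exhibits $\tilx'$ as a finite cover of this split supermanifold, Corollary \ref{twoeight} forces $\tilx'$ to be split as well. But a split (hence projected) odd first-order deformation of a super Riemann surface must be trivial --- this is the contrapositive of Example \ref{firstorder} --- so $\tilx'\cong\tilx\times\Spec k[\eta]$. No local analysis and no patching is required.

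Your \'etale-rigidity-plus-local-normal-form strategy is a plausible alternative, but as written it is incomplete in two places. First, the claim that the superconformal constraint ``leaves no free odd parameter'' in the local models \eqref{localdescRSR} and \eqref{localdescRSNS} is precisely the computation that would need to be carried out; it is not an observation one can invoke without doing the calculation. Second, even if each local piece of $\pi'_*\mc{O}_{\tilx'}$ is shown to be the trivial extension, you must verify that the local trivializations agree on overlaps; the potential discrepancy is a \v{C}ech $1$-cocycle, and its vanishing does not follow from local rigidity alone. The paper's global argument via Corollary \ref{twoeight} sidesteps both issues in one line, which is what makes it preferable here.
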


\begin{proof}
Suppose $\pi': \tilx' \to X \times \Spec [\eta]$ is an odd first order deformation of $\pi: \tilx \to X$, and note that $X \times \Spec[\eta]$ is split since it is a \emph{trivial} first order odd deformation of $X$.  Furthermore, since $\pi'$ is a finite covering of a split super Riemann surface, $\tilx'$ must be split by Corollary \ref{twoeight}, and thus $\tilx \cong X' \times \Spec D$, and $\pi' \cong \pi \times \on{id}$ by Lemma \ref{appi:hzero}.
    
\end{proof}

\begin{proof}[Proof of \emph{Theorem} \ref{app:deformation theorem}] We will prove that differential at $\pi$ of the projection $p: \compbranch \to \modtwoone$ is an isomorphism, i.e., that \[ dp \vert_{\pi}: T\compbranch \vert_{\pi} \to T \modtwoone \vert_{p(\pi}\]
is an isomorphism of super vector spaces. The morphism $p$ is surjective by definition, and so  we are left to show that $dp \vert_{\pi}$ is injective. 

The tangent space to $\compbranch$ at $\pi$ is the set $\on{Def}_{\pi}$ of isomorphism classes of first order deformations of $\pi$. The tangent space to $\modtwoone$ at $X=p(\pi)$ is the set of isomorphism classes of first order deformations of $X$ and we know from Lemma \ref{deformSRSns} that the set of isomorphism classes of first order deformations of $X$ 
is canonically isomorphic to $H^1(X, \mc{A}_X(-\nspunc))$, where $\nspunc$ denotes the one NS puncture on $X$. Under these identifications,  $\dpi: \on{Def}_{\pi} \to H^1(X, \mc{A}_X(-\nspunc))$ sends a isomorphism class of first order deformations $\pi': \tilx' \to X'$ to the isomorphism class of the first order deformation $X'$ of $X$.

The identity object $0$ in $H^1(X, \mc{A}_X(-\nspunc))$ is the class of the trivial deformation, $X \times \Spec D$, of $X$, and the identity object $0$ in $\on{Def}_{\pi}$ is the class of the trivial deformation $\pi \times \on{id}_D: \tilx \times \Spec D \to X \times \Spec D $ of $\pi$. 
To prove that $\dpi$ is injective we will show that $\dpi^{-1}(0)=0$.

The map $\dpi$ is a direct sum of its even and odd components, 
\[ \dpi^+: \on{Def}_{\pi}^+ \to H^1(X, \mc{A}_X(-\nspunc))^+, \ \ \dpi^-: \on{Def}_{\pi}^- \to H^1(X, \mc{A}_X(-\nspunc))^-. \]
Set $D^+=\Spec \bb[t]/t^2$ and $D^-= \Spec \bb[\eta]$.
The identity objects in $\on{Def}_{\pi}^+$ and $\defpi^-$ are $\pi \times \on{id}_{D^+}$ and $ \pi \times \on{id}_{D^-}$, respectively. The identity objects in $H^1(\mc{A}_X(-\nspunc))^+$ and $ H^0(\mc{A}_X(-\nspunc))^-$ are $X \times D^+$ and $X \times D^-$, respectively. 
The map $\dpi^-$ is injective by Lemma \ref{appi:oddtangentspace}: the lemma says that any first order deformation of $\pi$ with target the trivial deformation of $X$, i.e., $\pi': \tilx' \to X \times D^+$,  is isomorphic to the trivial deformation. 
For $\dpi^+$: Since $D^+$ is purely bosonic, $ \compbranch(D^+)=(\compbranch)_{bos}(D^+)$, and \[ \dpi^+=dp_{bos} \vert_{\pi_{bos}}: T(\compbranch)_{bos} \vert_{\pi_{bos}} \to T SM_{2,1} \vert_{p_{bos}(\pi_{bos})}.\] The diagram \ref{finite} shows $p_{bos}$ as the pullback of the \'etale morphism $\tilde{M}_{2,1} \to M_{2,1}$. Since \'etale morphisms are stable under pullback,   $p_{bos}$ is \'etale, and  $\dpi^+=d p_{bos} \vert_{\pi_{bos}}$ is therefore an isomorphism. 
\end{proof}

\subsection{$\compbranch$ is a Deligne-Mumford superstack}
A superstack $\mc{X}$ is said to be \emph{Deligne-Mumford} if there exist a superscheme $U$ and a morphism $U \to \mc{X}$ which is representable \footnote{ A morphism of superstacks $\mc{Y} \to \mc{X}$ is said to be \emph{representable} if for every superscheme $T$ and morphism $T \to \mc{X}$ the fiber product $\mc{Y} \times_{\mc{X}} T$ is representable by a superscheme. Let $P$ be a property of morphisms of superschemes which is stable under pullback, e.g., smooth, \'etale, proper. A representable morphism $\mc{Y} \to \mc{X}$ has property $P$ if $\mc{Y} \times_{\mc{X}} T \to T$ has property $P$. }, surjective, and \'etale. 

\begin{theorem} \label{app: DMstack} $\compbranch$ is a smooth, proper Deligne-Mumford superstack. 
    
\end{theorem}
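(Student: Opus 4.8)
The plan is to follow the two-step strategy announced at the start of the appendix: first invoke the Super Artin Theorem (Theorem \ref{app: artin}) to establish that $\compbranch$ is an algebraic superstack locally of finite type, and then apply Olsson's criterion (Theorem 8.3.3 of \cite{olsson2016algebraic}) to upgrade this to the Deligne--Mumford property. Throughout, I would exploit the finite morphism $p \colon \compbranch \to \modtwoone$ of Lemma \ref{S1}, transferring as much structure as possible from the known algebraic superstack $\modtwoone$ through the cartesian square \eqref{finite}, whose top arrow $p_{bos}$ is finite and \'etale.

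For the Artin conditions, I would first verify the standing hypotheses of Theorem \ref{app: artin}, namely limit preservation and Schlessinger's conditions; the resulting finite-dimensionality of the tangent set is exactly the content of Theorem \ref{app:deformation theorem}, which identifies it with $H^1(X, \mc{A}_X)$ of super dimension $4|3$. Condition (A1), representability of the diagonal, reduces to the statement that isomorphisms of branched covers form an algebraic superspace, which follows because $\Aut(\pi)$ sits inside the finite groups $\Aut(X) \times \Aut(\tilx)$. For the effectivity condition (A2) I would apply Grothendieck existence to the covering datum $\pi \colon \tilx \to X$ together with effectivity for $\modtwoone$. The coherence condition (A3) is the easiest point: since there are no obstructions to lifting $\pi$ (by the adaptation of Proposition 3.3 of \cite{donagi2015supermoduli}), the sheaf $\on{Ob}_\pi$ vanishes and coherence is automatic. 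Conditions (A4)--(A6) for the deformation sheaf $\mc{D}$ I would handle by identifying $\mc{D}$, even- and odd-locally, with cohomology sheaves of the $\mc{A}$-type sheaves appearing in Theorem \ref{app:deformation theorem} and invoking their flat base-change and \'etale-localization behaviour; constructibility then follows from semicontinuity of these cohomology groups.

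Granting algebraicity, the Deligne--Mumford property follows from Olsson's criterion once I show that $\Aut_\pi$ is a reduced finite group scheme over $\Spec \bb$ for every $\pi \in \compbranch(\Spec \bb)$. Finiteness is immediate from the inclusion $\Aut(\pi) \subset \Aut(X) \times \Aut(\tilx)$, both factors being finite for genus $>1$; reducedness is exactly the statement $\on{InfAut}(\pi) = \{\on{id}\}$ proved above, since this says the tangent space to $\Aut(\pi)$ at the identity vanishes, forcing the group scheme to be \'etale, hence reduced.

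Finally, smoothness and properness. Smoothness I would deduce from the unobstructedness of the deformation theory of $\pi$ together with the fact, established in the proof of Theorem \ref{app:deformation theorem}, that $dp\vert_\pi$ is an isomorphism: this exhibits $p$ as formally \'etale, so smoothness descends from the smoothness of $\modtwoone$. Properness follows because $p$ is finite (Lemma \ref{S1}) and finite morphisms are proper, giving properness of $\compbranch$ relative to $\modtwoone$. I expect the genuine obstacle to lie in the technical Artin conditions (A2) and (A4)--(A6) in the super setting, where one must control base change and constructibility of the deformation and obstruction sheaves; these are, however, considerably eased both by the vanishing of obstructions and by the finiteness of $p$, which allow one to import the corresponding facts from $\modtwoone$.
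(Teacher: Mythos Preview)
Your proposal is correct and follows essentially the same route as the paper: verify the super Artin conditions (with (A3)--(A6) handled via the cohomological identification of $D_\pi$ and the vanishing of obstructions, and (A2) via effectivity for $\modtwoone$ plus Grothendieck existence for the finite covering datum), then upgrade to Deligne--Mumford using finiteness of $\Aut(\pi)\subset\Aut(X)\times\Aut(\tilx)$. The only cosmetic differences are that the paper deduces smoothness directly from $\on{Ob}=0$ and properness from properness of the bosonic reduction, whereas you route both through the finite \'etale morphism $p$; these are equivalent, though note your argument as stated gives properness \emph{over} $\modtwoone$ rather than absolute properness.
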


\begin{proof} We first prove that $\compbranch$ is an algebraic superstack by showing that $\compbranch$ satisfies the conditions of the super Artin theorem. That $\compbranch$ is Deligne-Mumford then follows from the fact that the automorphism group $\on{Aut}_{\pi}$ is finite, see Theorem \cite{olsson2016algebraic} in \cite{olsson2016algebraic}.

 \noindent (A1): We will prove that the diagonal morphism $\Delta: \compbranch \to \compbranch \times \compbranch$ is representable by an algebraic superspace locally of finite type. This condition is equivalent to the condition that for every pair $(\pi_1: \tilx \to X), (\pi_2: \tilx' \to X') \in \compbranch(T)$ the functor 
    \begin{align*} I:=& \on{Isom}(\pi_1, \pi_2): \on{sSch}/T \to \on{Set} \\
    {} & I(f:S \to T)= \{ \text{isomorphisms } \ f^*\pi_1 \lgr f^*\pi_2  \ \text{over} \ T \}\end{align*}
    is representable by a superscheme. 

    We will first prove that $I$ is representable in the case $\pi_1=\pi_2$, so that $I=\on{Aut}_T(\pi_1)$. Let $\on{Cov}_T(\tilx, X)$ denote the groupoid of all branched covers of $X$ by $\tilx$, and note that $\pi_1$ is an object in $\on{Cov}_T(\tilx, X)$. There is a natural action of $\on{Aut}_T(\tilx) \times \on{Aut}_T(X)$ on the set of objects in $\on{Cov}_T(\tilx, X)$ by conjugation,  \[ (\tilde{\phi}, \phi) \cdot \pi = \phi \circ \pi \circ \tilde{\phi}^{-1}. \]
    The automorphism group $\on{Aut}_T(\pi_1)$ is the stabilizer subgroup $\on{Stab}(\pi_1) \subset \on{Aut}_T(\tilx) \times \on{Aut}_T(X)$, and is therefore representable by a superscheme. Coming back to the general case,  the sheaf $\on{Isom}(\pi_1, \pi_2)$ is a $\on{Aut}_T(\pi_1)$-torsor, and is therefore representable.  
\newline 

That (S1'), (S2), (A3), (A4), (A5), (A6) hold follows from the identification of $D_{\pi}(M)$ with $H^1(X, \mc{A}_{X/A_0} \otimes_{\mc{O}_{X}} f_0^*(M))$ where $f_0: X \to \Spec A_0$ is the structure map, which is clearly a finite $A_0$-module, and $\on{Ob}_{\pi}(M)=0$ by Proposition 3.3 in \cite{donagi2015supermoduli}, adapted appropriately to the case where $\pi$ has a non-zero number of ramification points with even local degree. 
\newline

Condition (A2): if $\{\pi_n: \tilx_n \to X_n \}$ is a compatible sequence in $\compbranch(\widehat{A}_0/\frak{m}^{n})$, where $X_n,\tilx_n$ are super Riemann surface over $ \Spec A_0/\frak{m}^n$, then it can be approximated by some $(\pi: \tilx \to X) \in \compbranch(\widehat{A_0})$.   Since $\modtwoone$ and $\compplain$ are algebraic superstacks, there exists $X \in \modtwoone(\Spec \widehat{A}_0)$ which maps to $\varprojlim_n X_n$ under the equivalence $\modtwoone( \spe \widehat{A}_0) = \varprojlim \modtwoone( \spe \widehat{A}_0/\frak{m}^n)$, and similarly there exist $\tilx \in \compplain(\widehat{A}_0)$ approximating $\varprojlim_n \tilx_n$.  Now apply Corollary 8.4.6 \cite{FGA} which states that there is an equivalence from the category  of finite $X$-superschemes, proper over $\Spec(\widehat{A}_0)$ to the category of finite formal $\widehat{X}$-superschemes proper over $\varprojlim_n \Spec \widehat{A}_0/\frak{m}^n$. The cited corollary is a corollary of the Grothendieck existence theorem, a super version of which was proved in \cite{moosavian2019existence}.

That $\compbranch$ is smooth follows from the vanishing of $\on{Ob}$, and it is proper because $(\compbranch)_{bos}$ is proper. 
\end{proof}

 \printbibliography

\end{document}